\algrenewcommand\algorithmicprocedure{\textbf{function}}
\theoremstyle{plain}
\newtheorem{theorem}{Theorem}
\newtheorem{lemma}[theorem]{Lemma}
\theoremstyle{definition}
\theoremstyle{remark}
\newcommand{\COMMENTED}[1]{}
\newtheoremstyle{cited}%
  {3pt}
  {3pt}
  {\itshape}
  {}
  {\bfseries}
  {.}
  {.5em}
  {\thmname{#1} \thmnumber{#2} \thmnote{\normalfont#3}}
\theoremstyle{cited}
\definecolor{eggplant}{RGB}{180,33,147}
\newcommand{\abs}[1]{\left|#1\right|}
\newcommand{\bracket}[1]{\left(#1\right)}
\newcommand{\norm}[1]{\left\|#1\right\|}
\begin{document}

\author{Ziang Yu}
\address[Ziang Yu]{Committee on Computational and Applied Mathematics, University of Chicago}

\author{Shiwei Zhang}
\address[Shiwei Zhang]{Center for Computational Quantum Physics, Flatiron Institute}

\author{Yuehaw Khoo}
\address[Yuehaw Khoo]{Department of Statistics, University of Chicago}

\title{Re-anchoring Quantum Monte Carlo with Tensor-Train Sketching}

\begin{abstract}
We propose a novel algorithm for calculating the ground-state energy of quantum many-body systems by combining auxiliary-field quantum Monte Carlo (AFQMC) with tensor-train sketching. In AFQMC, a good trial wavefunction to guide the random walk is crucial for improving the sampling efficiency and controlling the sign problem. 
Our proposed method iterates between determining a new trial wavefunction in the form of a tensor train, derived from the current walkers, and using this updated trial wavefunction to anchor the next phase of AFQMC. Numerical results demonstrate that the algorithm is highly accurate for large spin systems. 
The overlap between the estimated trial wavefunction and the ground-state wavefunction also achieves high fidelity. 
We additionally provide a convergence analysis, highlighting how an effective trial wavefunction can reduce the variance in the AFQMC energy estimation. 
From a complementary perspective, our algorithm also extends the reach of tensor-train methods for studying quantum many-body systems.  
\end{abstract}

\maketitle 

\section{Introduction}

The quantum many-body problem appears in a wide range of fields, including condensed matter physics, high energy and nuclear physics, quantum chemistry, and material science. One of the most challenging parts of this problem is that the computational cost grows exponentially with the size of the system. Quantum Monte Carlo (QMC) \cite{PhysRevD.24.2278,sugiyama1986auxiliary,PhysRevB.40.506,RevModPhys.83.349} is a class of algorithms that can efficiently deal with high-dimensional problems by reducing the computational cost to a polynomial scale with the system size. It guarantees that, in expectation, one can obtain the exact ground-state energy. However, with a finite sample size, QMC generally suffers from the sign problem \cite{PhysRevB.41.9301,PhysRevLett.94.170201}, meaning the variance grows exponentially in time. To fix this issue, the constrained-path auxiliary-field quantum Monte Carlo (cp-AFQMC) \cite{zhang1997constrained} and phaseless auxiliary-field quantum Monte Carlo (ph-AFQMC) \cite{PhysRevLett.90.136401} were developed to control the sign problem, and have been successfully applied to both lattice models \cite{PhysRevB.97.235127,PhysRevA.86.053606,PhysRevLett.119.265301,PhysRevB.99.165116,PhysRevB.102.041106,PhysRevB.102.214512,PhysRevB.103.115123,PhysRevResearch.4.013239,PhysRevLett.128.203201} and realistic materials \cite{al2006auxiliary,al2007study,purwanto2009excited,purwanto2008eliminating,shee2017chemical,motta2018communication,shee2018phaseless,shee2019singlet,lee2020performance,PhysRevLett.114.226401}. In cp-AFQMC, the wavefunction is represented formally as the sum of an ensemble of statistically independent random walkers. The sign problem is avoided by introducing a trial wavefunction to bias the random walkers such that they maintain a non-negative overlap or a constant gauge 
with the trial wavefunction, with the price of a potential systematic error. This bias depends on the quality of the trial wavefunction \cite{chang2008spatially}. Ideally, the bias will be removed if the trial wavefunction is exactly the ground-state wavefunction of the system \cite{shi2021some}. Based on this result, self-consistent approaches for successively improving the trial wavefunction have been developed \cite{PhysRevB.94.235119,zheng2017stripe,PhysRevB.99.045108,PhysRevResearch.3.013065} to reduce the systematic error. 

On the other hand, a different type of approach towards the quantum many-body problem is to directly solve for the wavefunction through approximating it by some low-complexity ansatz. One of the most popular ansatz is the matrix product state (MPS) \cite{mps-1,mps-3}, also known as the tensor train (TT) \cite{tt-decomposition}. A huge advantage of TT representation is that its storage complexity and computational complexity can be made near linear (instead of exponential) in the dimension $d$. However, for complicated systems, the approximation error can be large, which leads to inaccurate 
estimates of physical observables. 

Recently, an algorithm that directly combines AFQMC and TT representation has been proposed by one of the authors \cite{chen2023combining}. In this method, the random walkers are projected into a TT by a TT-sketching technique \cite{hur2023generative} at every step. It presents a potentially cheaper strategy to update a TT, using AFQMC to simulate a matrix-vector multiplication. Although this method avoids the expensive compression of the Hamiltonian operator, it still suffers from the limitation that sometimes one cannot approximate the ground-state wavefunction as a low-rank TT.


In this paper, we provide improvements to AFQMC and tensor methods by combining the best of both worlds. A TT representation of the wavefunction is estimated periodically using TT-sketching. Then the estimated tensor-train wavefunction is used as the trial wavefunction to ``anchor'' the next episode of AFQMC. Different from a previously proposed method that uses a fixed TT trial wavefunction \cite{wouters2014projector}, our method iteratively builds the TT trial wavefunction from the AFQMC walkers. Specifically, we alternate between estimating a new trial wavefunction, and running the next episode of AFQMC. 
This can potentially remove the 
bias in cases where the sign/phase problem is present, by successively improving the trial wavefunction. 
It can also significantly reduce the variance. Our approach in estimating the tensor-train wavefunction is efficient, as it requires only enough accuracy to effectively guide the walkers.
This is unlike other tensor-train approaches, including \cite{chen2023combining}, where it is imperative to obtain an accurate approximation to the ground state since the tensor-train is used to obtain the final energy estimate. 

The rest of this paper is organized as follows. In Section~\ref{sec:preliminary}, we introduce the procedure of cp-AFQMC for calculating the ground state of spin models, and also some preliminaries for TT and TT-sketching. Our proposed algorithm is then presented in Section~\ref{sec:proposed algorithm}. In Section~\ref{sec:analysis}, we provide a theoretical analysis for the convergence of cp-AFQMC, showing that the procedure can give an accurate energy estimate, despite the fact that the wavefunction itself can have large variance. The results of numerical experiments are shown in Section~\ref{sec:numerical results}. 

\section{Preliminary}\label{sec:preliminary}

\subsection{cp-AFQMC}\label{sec:CPMC}

In this section, we illustrate the application of cp-AFQMC to determine the ground-state energy of a spin model. The ground-state problem is stated as
\begin{equation}\label{eq:ground state problem}
    \Psi_0 = \underset{\Psi\in\mathbb{R}^{2^d}}{\arg\min}\ \langle \Psi,H\Psi\rangle, \text{ s.t. }\norm{\Psi}_2^2=1, 
\end{equation}
where $d$ is the number of spins. 
The ground-state wavefunction $\Psi_0$ can be obtained by an \emph{imaginary-time evolution}, i.e.
\begin{equation}\label{eq:ground state projection}
    \Psi_0 \propto \lim_{\tau\to\infty} e^{-\tau H}\Psi_\mathrm{tr}.
\end{equation}
where $\Psi_0$ denotes the solution to \eqref{eq:ground state problem}, and $\Psi_\mathrm{tr}$ is some trial wavefunction picked in prior satisfying $\braket{\Psi_\mathrm{tr},\Psi_0}\ne0$. Numerically, the limit in \eqref{eq:ground state projection} can be calculated iteratively by
\begin{equation*}
    \Psi^{(n+1)} = e^{-\Delta\tau H}\Psi^{(n)}.
\end{equation*}
Applying $e^{-\Delta \tau H}$ to $\Psi^{(n)}$ amounts to applying a $2^d\times 2^d$ matrix to a vector of size $2^d$, which suffers from the curse of dimensionality. To solve this issue, QMC performs $e^{-\Delta \tau H}\Psi^{(n)}$ with a Monte Carlo method. In particular, cp-AFQMC represents the wavefunction as the following:
\begin{equation}\label{eq:wavefunction representation}
    \Psi^{(n)} \approx \widehat   \Psi^{(n)} = \sum_{k=1}^N   {\Phi}_k^{(n)}, 
\end{equation}
where each $ {\Phi}_k^{(n)}$, called a random walker, is a separable function, $N$ is the number of walkers. A discrete Hubbard-Stratonovich transformation \cite{ulmke2000auxiliary} can be applied to decompose the operator $e^{-\Delta\tau H}$ as  
\begin{equation}\label{eq:operator decomposition}
e^{-\Delta\tau H} = \sum_{\vec{x}}P(\vec{x}) B(\vec{x}) =  \mathbb{E}_{\vec{x}\sim P}B(\vec{x}),
\end{equation}
where $B(\vec{x}){\Phi}_k^{(n)}$ can be computed cheaply, and it remains a meanfield or separable state.
Then, one step of imaginary-time evolution can be approximated by
\begin{equation}\label{eq:imaginary time without importance}
e^{-\Delta\tau H}\widehat \Psi^{(n)}= \sum_k   e^{-\Delta\tau H}{\Phi}_k^{(n)} = \sum_k \sum_{\vec{x}}P(\vec{x}) B(\vec{x}){\Phi}_k^{(n)} \approx \sum_k {\Phi}_k^{(n+1)}=: \widehat \Psi^{(n+1)},
\end{equation}
where 
\begin{equation*}
\Phi^{(n+1)}_{k} = B(\vec{x}^{(n)}_k) {\Phi}_k^{(n)}, \vec{x}^{(n)}_k\sim P.
\end{equation*}

As key algorithmic ingredients, cp-AFQMC further incorporates importance sampling when sampling $\vec{x}_k^{(n)}$in order to reduce the variance. 
Given a walker $\Phi_k^{(n)}$ satisfying $\braket{\Psi_\mathrm{tr}, \Phi_k^{(n)}} > 0$, we incorporate a new probability distribution function $Q_k^{(n)}$, which is defined as
\begin{equation}\label{eq:importance sampling pdf normalized}
    Q_k^{(n)}(\vec{x}) = \frac{1}{\mathcal{N}_k^{(n)}} \frac{\max\{\braket{\Psi_\mathrm{tr}, B(\vec{x})\Phi_k^{(n)}},0\}}{\braket{\Psi_\mathrm{tr}, \Phi_k^{(n)}}} P(\vec{x}), 
\end{equation}
where $\mathcal{N}_k^{(n)}$ is the normalization factor. 
This new probability distribution function favors those walkers that have a large overlap with the trial wavefunction. Moreover, the walkers are guaranteed to have a positive overlap with the trial wavefunction, since the probability $Q_k^{(n)}(\vec{x})$ is set to $0$ if some $\vec{x}$ leads to a non-positive overlap with $\Psi_\mathrm{tr}$, hence constraining the paths and causing a bias, which is also known as the sign problem. In an extreme case where all the auxiliary fields $\vec{x}$ lead to non-negative overlap $\braket{\Psi_\mathrm{tr}, B(\vec{x})\Phi_k^{(n)}} \le 0$, and thus $Q_k^{(n)}(\vec{x})=0$ for all $\vec{x}$, then this walker is eliminated from the procedure.

With the new probability distribution function \eqref{eq:importance sampling pdf normalized}, the operator decomposition \eqref{eq:operator decomposition} becomes
\begin{equation}\label{eq:operator decomposition with cp approximation}
    e^{-\Delta\tau H} = \sum_{\vec{x}}P(\vec{x})B(\vec{x}) \approx \sum_{\vec{x}:Q^{(n)}_k(\vec{x})>0} P(\vec{x})B(\vec{x}) = \sum_{\vec{x}:Q^{(n)}_k(\vec{x})>0} Q^{(n)}_k(\vec{x}) \bracket{\frac{P(\vec{x})}{Q^{(n)}_k(\vec{x})} B(\vec{x})}. 
\end{equation}
The approximate equality reflects the fact that the second sum runs over all the auxiliary fields $\vec{x}$, while the final expression restricts the summation to the subset of $\vec{x}$ satisfying $Q_k^{(n)}(\vec{x})>0$. In a special case where $Q_k^{(n)}(\vec{x})>0$ for all $\vec{x}$, the approximation in \eqref{eq:operator decomposition with cp approximation} becomes exact. With this new decomposition, the propagation \eqref{eq:imaginary time without importance} becomes
\begin{equation}\label{eq:ensemble propagation math}
    e^{-\Delta\tau H}\widehat \Psi^{(n)} \approx \sum_k \sum_{\vec{x}:Q^{(n)}_k(\vec{x})>0} Q^{(n)}_k(\vec{x}) \bracket{\frac{P(\vec{x})}{Q^{(n)}_k(\vec{x})} B(\vec{x}) \Phi_k^{(n)}} \approx \sum_k \Phi_k^{(n+1)} =: \widehat \Psi^{(n+1)}, 
\end{equation}
where 
\begin{equation}\label{eq:walker propagation math}
    \Phi_k^{(n+1)} = \frac{P(\vec{x}_k^{(n)})}{Q^{(n)}_k(\vec{x}_k^{(n)})} B(\vec{x}_k^{(n)}) \Phi_k^{(n)}, \vec{x}_k^{(n)} \sim Q_k^{(n)}. 
\end{equation}

We summarize the whole algorithm in Algorithm~\ref{alg:cp-AFQMC}. 

\begin{algorithm}[!htbp]
\caption{cp-AFQMC}\label{alg:cp-AFQMC}
\begin{algorithmic}
\Require 
a Hamiltonian $H$ and a corresponding decomposition $e^{-\Delta\tau H} = \sum_{\vec{x}}P(\vec{x}) B(\vec{x})$, initial walkers $\widehat \Psi^{(0)} = \sum_k \Phi_k^{(0)}$, trial wavefunction $\Psi_\mathrm{tr}$, step size $\Delta\tau$, number of iteration steps $M$. 
\Ensure Walkers $\widehat \Psi^{(M)} = \sum_k \Phi_k^{(M)}$. 
\For{$n=0,1,\cdots,M-1$}

For walkers satisfying $\braket{\Psi_\mathrm{tr},\Phi^{(n)}_k}>0$: 
\begin{enumerate}[label=\arabic*., ref=\arabic*, leftmargin=1.5cm]
\item Sample an $\vec{x}^{(n)}_k$ from $Q_k^{(n)}$ defined in \eqref{eq:importance sampling pdf normalized}.
\item Update the walker $\Phi_k^{(n+1)} = \frac{P(\vec{x}_k^{(n)})}{Q^{(n)}_k(\vec{x}_k^{(n)})} B(\vec{x}_k^{(n)}) \Phi_k^{(n)}$. 

\end{enumerate}

\EndFor

\end{algorithmic}
\end{algorithm}

We detail the exact implementations of Algorithm~\ref{alg:cp-AFQMC} and its application on the transverse-field Ising (TFI) model in the appendix.

\subsection{Tensor-train sketching}

In this section, we introduce TT representation and TT-sketching. 
We consider a $d$-dimensional tensor
\begin{equation*}
    u: [n_1] \times \cdots \times [n_d] \to \mathbb{R}, 
\end{equation*}
where we denote $[n]=\{1,2,\cdots,n\}$ for some positive integer $n$. A TT representation is of the form
\begin{equation*}
    u(i_1,\cdots,i_d) = \sum_{\alpha_1=1}^{r_1} \cdots\sum_{\alpha_{d-1}}^{r_{d-1}} G_1(i_1,\alpha_1) G_2(\alpha_1,i_2,\alpha_2) \cdots G_{d-1}(\alpha_{d-2},i_{d-1},\alpha_{d-1}) G_d(\alpha_{d-1},i_d), 
\end{equation*}
where we call tensor $G_j$ the $j$-th tensor core of the TT, and $(r_1,\cdots,r_{d-1})$ the TT-rank. In short, we denote it as
\begin{equation*}
    u = G_1 \circ \cdots \circ G_d. 
\end{equation*}
We use the notation $m:n$ to denote the set $\{m,m+1,\cdots,n\}$ for two positive integers $m<n$, and further denote the truncated TT as
\begin{equation*}
    G_{1:j}(i_{1:j},\alpha_j) = \sum_{\alpha_1=1}^{r_1}\cdots\sum_{\alpha_{j-1}=1}^{r_{j-1}} G_1(i_1,\alpha_1) G_2(\alpha_1,i_2,\alpha_2) \cdots G_{j}(\alpha_{j-1},i_j,\alpha_j), 
\end{equation*}
and
\begin{equation*}
    G_{j:d}(i_{j:d},\alpha_{j-1}) = \sum_{\alpha_j=1}^{r_j} \cdots \sum_{\alpha_{d-1}}^{r_{d-1}} G_{j}(\alpha_{j-1},i_j,\alpha_j) \cdots G_{d-1}(\alpha_{d-2},i_{d-1},\alpha_{d-1}) G_d(\alpha_{d-1},i_d). 
\end{equation*}
A special case is when $r_1=\cdots=r_{d-1}=1$, we can write the TT as
\begin{equation*}
    u(i_1,\cdots,i_d) = G_1(i_1)\cdots G_d(i_d),
\end{equation*}
which is a separable wavefunction. 
The advantage of TT is that it represents a $d$-dimensional tensor with a storage complexity linear in $d$. Moreover, the computational cost of the inner product of two TTs is also linear in $d$. See numerical verification in the appendix. 

A natural problem is how to fit given data into a low-rank TT. In this work, we consider data given as a sum of random walkers $\widehat{\Psi}$, as in \eqref{eq:wavefunction representation}. The wavefunction in \eqref{eq:wavefunction representation} generally requires large storage, since the number of random walkers can be very large. Therefore, we aim to compress this representation into a low-rank TT. TT-sketching provides an effective procedure for accomplishing this task. 
The idea of TT-sketching is similar to randomized SVD \cite{drineas2006fast} for matrix decomposition. More specifically, if the target tensor $\widehat{\Psi}$ has a low-rank TT structure, then the unfolding matrix $\widehat{\Psi}_j \in \mathbb{R}^{(n_1\cdots n_j)\times(n_{j+1}\cdots n_d)}$, defined by reshaping the tensor $\widehat{\Psi}$:
\begin{equation*}
    \widehat{\Psi}_j(i_{1:j};i_{j+1:d}) = \widehat{\Psi}(i_1,\cdots,i_d), 
\end{equation*}
is also low-rank \cite{tt-decomposition}. Thus, we expect that 
\begin{equation*}
    \mathrm{Range}(\widehat{\Psi}_j) = \mathrm{Range}(\widehat{\Psi}_jT_j^\mathrm{sketch})
\end{equation*}
with high probability, for some chosen sketching matrix $T_j^\mathrm{sketch} = [T_\mathrm{sketch}(i_{j+1:d};\xi_j)]_{(i_{j+1:d}; \xi_j)} \in \mathbb{R}^{(n_{j+1}\cdots n_d)\times R_j}$, where $R_j$ is the number of columns of the sketching matrix, usually selected to be larger than the target rank. Furthermore, we hope to construct tensor cores $G_1, \cdots, G_j$ satisfying 
\begin{equation*}
    \mathrm{Range}(G_{1:j}) = \mathrm{Range}(\widehat{\Psi}_j), 
\end{equation*}
where we treat $G_{1:j} = [G_{1:j}(i_{1:j}; \xi_j)]_{(i_{1:j};\xi_j)} \in \mathbb{R}^{(n_1\cdots n_j)\times R_j}$ as a matrix. 
To achieve these goals, we can simply require
\begin{equation*}
    G_{1:j} = \widehat{\Psi}_j T_j^\mathrm{sketch}. 
\end{equation*}
The tensor diagram for these equations is shown in Figure~\ref{fig:tt sketching 1}, where we take a $4$-dimensional case as an illustration. We use $T = T_1 \circ T_2 \circ T_3 \circ T_4$ in TT form as the sketching tensor. 

\begin{figure}[!htbp]
\centering
\vspace{-0.2cm}
\includegraphics[width=0.6\linewidth]{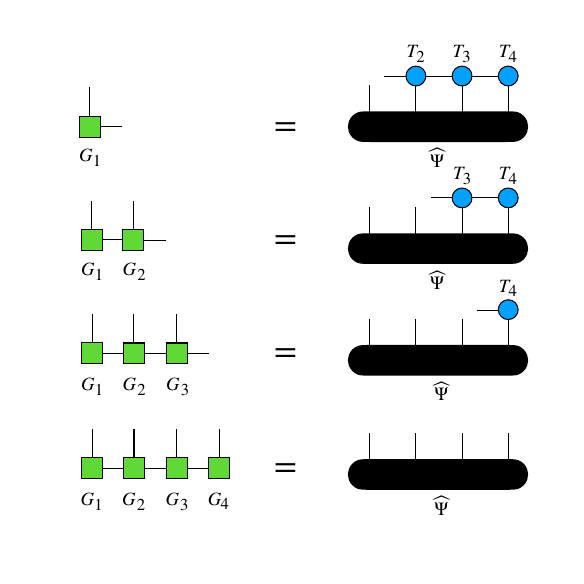}
\vspace{-0.5cm}
\caption{Tensor diagrams illustrating the goal of TT-sketching using the $4$-dimensional case as an example. }
\label{fig:tt sketching 1}
\end{figure}

The equations in Figure~\ref{fig:tt sketching 1} are over-determined and still have an exponentially large size. To further reduce computational cost, we introduce another sketching tensor $S = S_1\circ S_2\circ S_3\circ S_4$ on the left. The detailed implementation is illustrated in Figure~\ref{fig:tt sketching 2}, where each equation (except the first) can be written in the form 
\begin{equation*}
    \sum_{\xi_{j-1}}X_j(\gamma_{j-1},\xi_{j-1})G_j(\xi_{j-1},i_j,\xi_j) = Y_j(\gamma_{j-1},i_j,\xi_j). 
\end{equation*}
The tensor cores can then be obtained sequentially by solving these equations via multiplying the pseudoinverse of $X_j$ with $Y_j$. After obtaining each tensor core $G_j$, we can further trim it to the target size by inserting a projector $V_jV_j^T$, where $V_j\in\mathbb{R}^{R_j\times r_j}$ and $r_j$ is the target rank. We group $G_jV_j$ as the final tensor core, and multiply $V_j^T$ into the next tensor core.

\begin{figure}[!htbp]
\centering
\vspace{-0.2cm}
\includegraphics[width=0.6\linewidth]{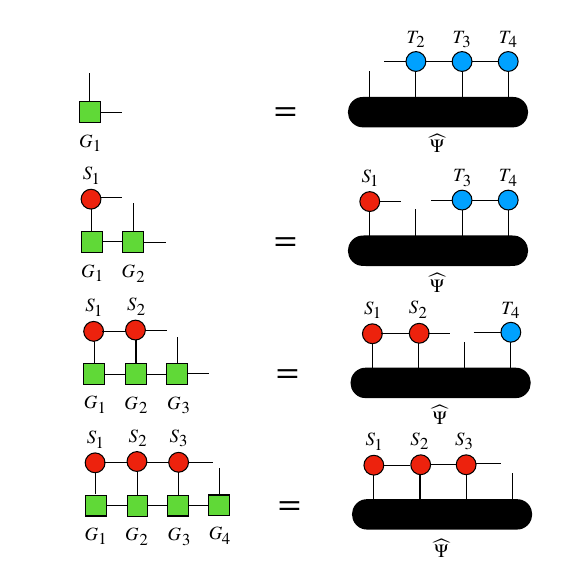}
\vspace{-0.2cm}
\caption{Tensor diagrams illustrating the detailed implementation of TT-sketching using the $4$-dimensional case as an example. }
\label{fig:tt sketching 2}
\end{figure}

For the sketching tensors $S$ and $T$, we construct their tensor cores $S_j$ and $T_j$ as
\begin{equation*}\label{eq:sketching functions core}
    S_j(\xi_{j-1},i_j,\xi_j) = \sum_l C_{S,j}(\xi_{j-1},l,\xi_j) v_l(i_j), \ T_j(\xi_{j-1},i_j,\xi_j) = \sum_l C_{T,j}(\xi_{j-1},l,\xi_j) v_l(i_j), 
\end{equation*}
where the coefficients $C_{S,j}(\xi_{j-1},l,\xi_j)$ and $C_{T,j}(\xi_{j-1},l,\xi_j)$ are Gaussian random tensors, and $\{v_l\}$ is a set of basis in the form
\begin{equation}\label{eq:cluster basis}
    v_1 = \bracket{
    1,1
    }^T, \
    v_2 = \bracket{
    \delta,-\delta
    }^T. 
\end{equation}
We aim to pick a relative small $\delta$ for the basis in \eqref{eq:cluster basis}, such that $\delta^2$ is still non-negligible but higher orders of $\delta$ can be ignored. Then 
$S$ and $T$ can be considered as $1$-cluster basis sketching functions \cite{wang2015fast,ahle2019almost,peng2023generative}. 

We remark that in this algorithm, as we sequentially compute the tensor cores, the left-hand-side tensors in Figure~\ref{fig:tt sketching 2} can also be computed sequentially, meaning that the computational costs for obtaining each tensor core are the same. Thus, the sketching algorithm has a linear time complexity in terms of the dimensionality $d$. See numerical verification in the appendix. 


\section{Proposed Algorithm}\label{sec:proposed algorithm}

In AFQMC, if 
there is a 
sign problem, the constraint $\braket{\Psi_\mathrm{tr},\Phi_k^{(n)}}>0$ may result in a systematic error, which depends on the quality of the trial function $\Psi_\mathrm{tr}$. Even in the case where there is no sign problem, a better trial wavefunction can still reduce the variance of the random walk and improve sampling efficiency.
To 
improve the trial wavefunction, 
our algorithm periodically updates the trial function with TT-sketching using the information from the walkers in cp-AFQMC. The procedure is illustrated in Figure~\ref{fig:alg illustration}. 
Formally, we summarize our proposed algorithm with details in Algorithm~\ref{alg:cp-AFQMC+TT}. 

\begin{figure}[!htbp]
\centering
\begin{tikzpicture}
    \node (A) at (0,1) {$\widehat{\Psi}^{(0)}$};
    \node (B) at (0,-1) {$\Psi_\mathrm{tr,0}$};
    \node (C) at (3.6,1) {$\widehat{\Psi}^{(M_\text{in})}$};
    \node (AC) at (1.8,1) {}; 
    \node (D) at (3.6,-1) {$\Psi_\mathrm{tr,1}$}; 
    \node (E) at (7.2,1) {$\widehat{\Psi}^{(2M_\text{in})}$};
    \node (CE) at (5.4,1) {};
    \node (F) at (7.2,-1) {$\Psi_\mathrm{tr,2}$}; 
    \node (cdot) at (8,0) {$\cdots$};

    \draw[->] (A) -- node[above] {random walk} (C);
    \draw[->] (B) -- node[right, xshift=6pt, yshift=6pt] {cp approx.} (AC);
    \draw[->] (C) -- node[left, yshift=-10pt] {sketching} (D); 
    \draw[->] (C) -- node[above] {random walk} (E);
    \draw[->] (D) -- node[right, xshift=6pt, yshift=6pt] {cp approx.} (CE);
    \draw[->] (E) -- node[left,yshift=-10pt] {sketching} (F); 
\end{tikzpicture}
\vspace{-0.3cm}
\caption{Proposed framework: cp-AFQMC-re-anchoring. }
\label{fig:alg illustration}
\end{figure}

\begin{algorithm}[!htbp]
\caption{cp-AFQMC-re-anchoring}\label{alg:cp-AFQMC+TT}
\begin{algorithmic}
\Require 
A Hamiltonian $H$ and a corresponding decomposition $e^{-\Delta\tau H} = \sum_{\vec{x}}P(\vec{x}) B(\vec{x})$, initial trial wavefunction $\Psi_{\mathrm{tr},0}$, step size $\Delta\tau$, number of iterations $M_\text{out}$ and $M_\text{in}$. 
\Ensure 
An ensemble of walkers $\widehat \Psi^{(M_\text{in} M_\text{out})} = \sum_k \Phi^{(M_\text{in} M_\text{out})}_{k}$, a wavefunction $\Psi_{\mathrm{tr},M_\text{out}}$.

\State Let $\widehat \Psi^{(0)} = \sum_k \Psi_\mathrm{tr,0}$. 
\For{$n_\text{out}=0,1,\cdots,M_\text{out}-1$}
\begin{itemize}
\item $ \widehat \Psi^{(M_\text{in}n_\text{out}+M_\text{in})}$ = cp-AFQMC($H,\widehat\Psi^{(M_\text{in}n_\text{out})}, \Psi_{\mathrm{tr},n_\text{out}},\Delta \tau, M_\text{in}$).
\item 
Apply TT-sketching on $\widehat{\Psi}^{(M_\text{in}n_\text{out}+M_\text{in})}$ and get a new trial wavefunction $\Psi_{\mathrm{tr},n_\text{out}+1}$.
\end{itemize}
\EndFor

\end{algorithmic}
\end{algorithm}

\section{Convergence Analysis of cp-AFQMC}\label{sec:analysis}

In this section, we provide some convergence analysis for cp-AFQMC. In Section~\ref{section:variance control}, we first understand the effect of importance sampling in controlling the variance of the overlap of the walkers with the trial wavefunction in one step of cp-AFQMC. In Section~\ref{section:convergence of cp-afqmc}, we then use this to analyze the convergence of cp-AFQMC in terms of both the energy and the wavefunction itself. In Section~\ref{sec:convergence energy}, we prove that a better trial wavefunction can lead to a smaller variance in ground-state energy. This fact motivates our algorithm to obtain a high-quality trial wavefunction, so that we effectively reduce the variance in ground-state energy estimation within the cp-AFQMC procedure. 
In Section~\ref{sec:convergence walker}, we demonstrate that the random walkers 
individually exhibit a large variance, which cannot be eliminated by improving the trial wavefunction. Consequently, the
finite ensemble of walkers \eqref{eq:wavefunction representation} does not reliably converge to the ground-state wavefunction in practice. This observation further motivates our proposed algorithm: we extract an explicit wavefunction in TT form, approximating the ground-state wavefunction, from the noisy data produced by the cp-AFQMC procedure.

\subsection{Variance and bias per step of cp-AFQMC}\label{section:variance control}
In this section, we provide an explanation on how importance sampling controls the variance of the overlaps in one step of cp-AFQMC, and discuss the systematic error caused by constrained-path approximation. For convenience, we rewrite the walker propagation \eqref{eq:walker propagation math} as
\begin{equation*}
    \Phi_k^{(n+1)} := \widetilde{B}_k^{(n)}(\vec{x}_k^{(n)}) \Phi_k^{(n)}, \vec{x}_k^{(n)} \sim Q_k^{(n)}, 
\end{equation*}
where we denote the operator
\begin{equation}\label{eq:tilde B}
    \widetilde{B}_k^{(n)}(\vec{x}) = \frac{P(\vec{x})}{Q_k^{(n)}(\vec{x})} B(\vec{x}), 
\end{equation}
which is only well-defined when $Q_k^{(n)}(\vec{x})>0$. Based on \eqref{eq:ensemble propagation math}, we hope 
\begin{equation*}
    \widetilde{B}_k^{(n)}(\vec{x}) \Phi_k^{(n)} \approx e^{-\Delta\tau H}\Phi_k^{(n)}
\end{equation*}
in a certain sense. We firstly state an observation regarding the update $\widetilde{B}_k^{(n)}(\vec{x}) \Phi_k^{(n)}$. 

\begin{lemma}\label{lemma:zero-variance}
$\langle \Psi_\mathrm{tr}, \widetilde{B}_k^{(n)}(\vec{x}) \Phi_k^{(n)}\rangle $ is a constant in $\vec{x}$. 
\end{lemma}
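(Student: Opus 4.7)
The plan is essentially a short unwinding of definitions to expose a cancellation. The main mental step is to notice that the probability distribution $\widetilde{P}_k^{(n)}$ was precisely designed with the factor $O_{\mathrm{tr}}(B(\vec{x})\phi_k^{(n)})$ in its numerator, and the inner product $\langle \Psi_\mathrm{tr}, \widetilde{B}_k^{(n)}(\vec{x})\Phi_k^{(n)}\rangle$ contains that same factor in its numerator, so after substitution it cancels and leaves something $\vec{x}$-independent.

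First I would substitute the definition \eqref{eq:tilde B} of $\widetilde{B}_k^{(n)}(\vec{x})$ and the definition \eqref{eq:walker as a whole} of $\Phi_k^{(n)}$ into the expression $\widetilde{B}_k^{(n)}(\vec{x})\Phi_k^{(n)}$. This gives a ratio $P(\vec{x})/\widetilde{P}_k^{(n)}(\vec{x})$ multiplied by $B(\vec{x})\phi_k^{(n)}$, weighted by $w_k^{(n)}/O_{\mathrm{tr}}(\phi_k^{(n)})$. Next I would plug in the explicit form \eqref{eq:importance sampling pdf} of $\widetilde{P}_k^{(n)}(\vec{x})$ so that the factors $P(\vec{x})$ and $O_{\mathrm{tr}}(\phi_k^{(n)})$ cancel, yielding
\begin{equation*}
\widetilde{B}_k^{(n)}(\vec{x})\Phi_k^{(n)} = \frac{\mathcal{N}_k^{(n)}\, w_k^{(n)}}{O_{\mathrm{tr}}(B(\vec{x})\phi_k^{(n)})}\, B(\vec{x})\phi_k^{(n)}.
\end{equation*}

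Then I would take the inner product with $\Psi_\mathrm{tr}$. Here the key observation is that by the hypothesis $\widetilde{P}_k^{(n)}(\vec{x})>0$ (which underlies $\widetilde{B}_k^{(n)}(\vec{x})$ being well-defined), we have $O_{\mathrm{tr}}(B(\vec{x})\phi_k^{(n)}) > 0$, and by the definition of $O_{\mathrm{tr}}$ as a positive part, this means $O_{\mathrm{tr}}(B(\vec{x})\phi_k^{(n)}) = \langle \Psi_\mathrm{tr}, B(\vec{x})\phi_k^{(n)}\rangle$. Therefore the factor $\langle \Psi_\mathrm{tr}, B(\vec{x})\phi_k^{(n)}\rangle$ from the inner product cancels exactly with the $O_{\mathrm{tr}}(B(\vec{x})\phi_k^{(n)})$ in the denominator, leaving
\begin{equation*}
\langle \Psi_\mathrm{tr}, \widetilde{B}_k^{(n)}(\vec{x})\Phi_k^{(n)}\rangle = \mathcal{N}_k^{(n)} w_k^{(n)},
\end{equation*}
which is independent of $\vec{x}$, as claimed.

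There is no real obstacle here; the entire proof is a one-line computation once one writes out the definitions. The only subtle point worth remarking on is the dichotomy built into $O_{\mathrm{tr}}$: the identification $O_{\mathrm{tr}}(B(\vec{x})\phi_k^{(n)}) = \langle\Psi_\mathrm{tr}, B(\vec{x})\phi_k^{(n)}\rangle$ requires positivity of the overlap, which is precisely the case that importance sampling restricts us to. Outside this case, $\widetilde{P}_k^{(n)}(\vec{x})=0$ and the expression is not evaluated at all, so the constancy statement is trivially vacuous there. This observation is the backbone of the zero-variance property highlighted in the lemma's label and will be invoked in the subsequent variance analysis.
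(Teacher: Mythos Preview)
Your proof is correct and follows the same approach as the paper: both unwind the definitions of $\widetilde{B}_k^{(n)}(\vec{x})$ and $\widetilde{P}_k^{(n)}(\vec{x})$ to expose the cancellation, arriving at the $\vec{x}$-independent value $\mathcal{N}_k^{(n)} w_k^{(n)}$ (the paper writes this equivalently as $\langle\Psi_\mathrm{tr},\Phi_k^{(n)}\rangle\,\mathcal{N}_k^{(n)}$). Your version is slightly more explicit about the positivity condition needed to identify $O_{\mathrm{tr}}(B(\vec{x})\phi_k^{(n)})$ with the actual overlap, which the paper leaves implicit.
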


\begin{proof}
Suppose we have a current walker $\Phi_k^{(n)}$, the overlap of the walker at the next step is then
    \begin{equation*}
         \braket{\Psi_\mathrm{tr},\widetilde{B}_k^{(n)}(\vec{x})\Phi_k^{(n)}} = \left\langle\Psi_\mathrm{tr},\frac{P(\vec{x})}{Q_k^{(n)}(\vec{x})} B(\vec{x})\Phi_k^{(n)}\right\rangle = \braket{\Psi_\mathrm{tr},\Phi_k^{(n)}}\mathcal{N}_k^{(n)}.
    \end{equation*}
The last equality is due to  \eqref{eq:importance sampling pdf normalized}, which shows that $\braket{\Psi_\mathrm{tr},\widetilde{B}_k^{(n)}(\vec{x})\Phi_k^{(n)}}$ has no $\vec{x}$ dependency since $\mathcal{N}_k^{(n)}$ has no $\vec{x}$ dependency.
\end{proof}
Lemma~\ref{lemma:zero-variance} tells us that the variance of $ \braket{\Psi_\mathrm{tr},\widetilde{B}_k^{(n)}(\vec{x})\Phi_k^{(n)}}$  is reduced by importance sampling, in fact, to $0$. We then study the value of $\braket{\Psi_\mathrm{tr},\widetilde{B}_k^{(n)}(\vec{x})\Phi_k^{(n)}}$, as a consequence of Lemma~\ref{lemma:zero-variance},  in the following two lemmas.
\begin{lemma}\label{lemma: no sign problem zero variance}
    When there is no sign problem, i.e. $ \braket{\Psi_\mathrm{tr},B(\vec{x})\Phi_k^{(n)}}>0$ for all $\vec{x}$, then
    \begin{equation}\label{eq:lemma: no sign problem zero variance}
         \braket{\Psi_\mathrm{tr},\widetilde{B}_k^{(n)}(\vec{x})\Phi_k^{(n)}} = 
         \braket{\Psi_\mathrm{tr}, e^{-\Delta\tau H}\Phi_k^{(n)} }. 
    \end{equation}
\end{lemma}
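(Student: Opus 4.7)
The plan is to build directly on Lemma~\ref{lemma:zero-variance}, which already reduces $\braket{\Psi_\mathrm{tr},\widetilde B_k^{(n)}(\vec x)\Phi_k^{(n)}}$ to $\braket{\Psi_\mathrm{tr},\Phi_k^{(n)}}\mathcal N_k^{(n)}$, and then to identify the normalization constant $\mathcal N_k^{(n)}$ under the no-sign-problem hypothesis. The key observation is that the nonlinearity in this whole setup comes solely from the $\max\{\cdot,0\}$ inside $O_{\mathrm{tr}}$; once that clipping is inactive, every remaining manipulation is linear in $\vec x$ and the identity $e^{-\Delta\tau H} = \sum_{\vec x} P(\vec x)B(\vec x)$ from \eqref{eq:operator decomposition} can be used directly.

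First I would write out $\mathcal N_k^{(n)}$ explicitly. Since $\widetilde P_k^{(n)}$ is a probability distribution, summing \eqref{eq:importance sampling pdf} over $\vec x$ gives
\begin{equation*}
\mathcal N_k^{(n)} \;=\; \sum_{\vec x} \frac{O_{\mathrm{tr}}(B(\vec x)\phi_k^{(n)})}{O_{\mathrm{tr}}(\phi_k^{(n)})}\,P(\vec x).
\end{equation*}
Under the no-sign-problem assumption $\braket{\Psi_\mathrm{tr},B(\vec x)\phi_k^{(n)}}>0$ for every $\vec x$, the clipping in $O_{\mathrm{tr}}$ is vacuous, so $O_{\mathrm{tr}}(B(\vec x)\phi_k^{(n)}) = \braket{\Psi_\mathrm{tr},B(\vec x)\phi_k^{(n)}}$. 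Pulling the $\Psi_\mathrm{tr}$-inner product out of the $\vec x$ sum and using $\sum_{\vec x}P(\vec x)B(\vec x) = e^{-\Delta\tau H}$ yields
\begin{equation*}
\mathcal N_k^{(n)} \;=\; \frac{\braket{\Psi_\mathrm{tr},\,e^{-\Delta\tau H}\phi_k^{(n)}}}{O_{\mathrm{tr}}(\phi_k^{(n)})}.
\end{equation*}

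Next I would combine this with Lemma~\ref{lemma:zero-variance}. Because any walker with nonzero weight has $\braket{\Psi_\mathrm{tr},\phi_k^{(n)}}>0$ and hence $O_{\mathrm{tr}}(\phi_k^{(n)}) = \braket{\Psi_\mathrm{tr},\phi_k^{(n)}}$, the definition \eqref{eq:walker as a whole} gives $\braket{\Psi_\mathrm{tr},\Phi_k^{(n)}} = w_k^{(n)}$. Substituting,
\begin{equation*}
\braket{\Psi_\mathrm{tr},\widetilde B_k^{(n)}(\vec x)\Phi_k^{(n)}} \;=\; \braket{\Psi_\mathrm{tr},\Phi_k^{(n)}}\,\mathcal N_k^{(n)} \;=\; w_k^{(n)}\,\frac{\braket{\Psi_\mathrm{tr},\,e^{-\Delta\tau H}\phi_k^{(n)}}}{O_{\mathrm{tr}}(\phi_k^{(n)})} \;=\; \braket{\Psi_\mathrm{tr},\,e^{-\Delta\tau H}\Phi_k^{(n)}},
\end{equation*}
by the linearity of the inner product and of $e^{-\Delta\tau H}$, which is exactly \eqref{eq:lemma: no sign problem zero variance}. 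The edge case $w_k^{(n)}=0$ is immediate since both sides vanish.

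There is no real technical obstacle: the only subtlety is remembering that $\widetilde B_k^{(n)}(\vec x)$ is only defined where $\widetilde P_k^{(n)}(\vec x)>0$, but under the no-sign-problem hypothesis this happens for \emph{every} $\vec x$ with $P(\vec x)>0$, so the sum defining $\mathcal N_k^{(n)}$ reconstructs the full operator $e^{-\Delta\tau H}$ without missing any auxiliary-field configuration. Thus, the lemma is essentially a bookkeeping calculation in which the constrained-path clipping becomes inert.
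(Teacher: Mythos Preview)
Your proof is correct and follows essentially the same route as the paper: both start from Lemma~\ref{lemma:zero-variance} and exploit that under the no-sign-problem hypothesis every $\vec x$ is in the support of $\widetilde P_k^{(n)}$, so the full identity $\sum_{\vec x}P(\vec x)B(\vec x)=e^{-\Delta\tau H}$ applies. The only cosmetic difference is that you compute $\mathcal N_k^{(n)}$ explicitly and pass through $\phi_k^{(n)}$, whereas the paper writes the constant as its own $\widetilde P_k^{(n)}$-expectation and then cancels $\widetilde P_k^{(n)}(\vec x)\widetilde B_k^{(n)}(\vec x)=P(\vec x)B(\vec x)$ in one line; the content is identical.
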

\begin{proof}
When $\braket{\Psi_\mathrm{tr},\widetilde{B}_k^{(n)}(\vec{x})\Phi_k^{(n)}}>0$, then $Q_k^{(n)}(\vec{x})>0$ for all $\vec{x}$ (see \eqref{eq:importance sampling pdf normalized}).  In this case, $\widetilde{B}_k^{(n)}(\vec{x})$ is well-defined for every $\vec{x}$. Then
\begin{equation*}
 \braket{\Psi_\mathrm{tr},\widetilde{B}_k^{(n)}(\vec{x})\Phi_k^{(n)}} =  \braket{\Psi_\mathrm{tr},\sum_{\vec{x}} Q_k^{(n)}(\vec{x})\widetilde{B}_k^{(n)}(\vec{x})\Phi_k^{(n)}} = \braket{\Psi_\mathrm{tr},\sum_{\vec{x}} P(\vec{x})B(\vec{x})\Phi_k^{(n)}} =  \braket{\Psi_\mathrm{tr}, e^{-\Delta\tau H}\Phi_k^{(n)} },
\end{equation*}
where the first equality is due to Lemma~\ref{lemma:zero-variance} and the fact that $\widetilde{B}_k^{(n)}(\vec{x})$ can be defined for every $\vec{x}$, and the second and third equality follow from  \eqref{eq:tilde B} and \eqref{eq:operator decomposition}, respectively.
\end{proof}
It might be the case where $Q_k^{(n)}(\vec{x})=0$ for some auxiliary fields $\vec{x}$. In this situation, some of the $\vec{x}$ values cannot be reached. Hence, a systematic error on $\braket{\Psi_\mathrm{tr},\widetilde{B}_k^{(n)}(\vec{x})\Phi_k^{(n)}}$ is introduced, as explained in the following.

\begin{lemma}
      When there is a sign problem, i.e. $ \braket{\Psi_\mathrm{tr},B(\vec{x})\Phi_k^{(n)}} \le 0$ for some $\vec{x}$, then 
    \begin{equation*}
         \braket{\Psi_\mathrm{tr},\widetilde{B}_k^{(n)}(\vec{x})\Phi_k^{(n)}} -
         \braket{\Psi_\mathrm{tr}, e^{-\Delta\tau H}\Phi_k^{(n)} } = -\sum_{\vec{x}:Q_k^{(n)}(\vec{x})=0} P(\vec{x}) \braket{\Psi_\mathrm{tr},B(\vec{x})\Phi_k^{(n)}} \ge 0. 
    \end{equation*}
\end{lemma}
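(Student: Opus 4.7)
The plan is to evaluate $\braket{\Psi_\mathrm{tr},\widetilde{B}_k^{(n)}(\vec{x})\Phi_k^{(n)}}$ directly, then subtract the full imaginary-time overlap and match the residual to the claimed expression on the right-hand side. Since by Lemma~\ref{lemma:zero-variance} this quantity is independent of $\vec{x}$, I would evaluate it by averaging against $\widetilde{P}_k^{(n)}$, giving
\begin{equation*}
\braket{\Psi_\mathrm{tr},\widetilde{B}_k^{(n)}(\vec{x})\Phi_k^{(n)}}
= \sum_{\vec{x}:\widetilde{P}_k^{(n)}(\vec{x})>0} \widetilde{P}_k^{(n)}(\vec{x}) \left\langle \Psi_\mathrm{tr}, \tfrac{P(\vec{x})}{\widetilde{P}_k^{(n)}(\vec{x})}B(\vec{x})\Phi_k^{(n)}\right\rangle
= \sum_{\vec{x}:\widetilde{P}_k^{(n)}(\vec{x})>0} P(\vec{x})\braket{\Psi_\mathrm{tr},B(\vec{x})\Phi_k^{(n)}}.
\end{equation*}
This is essentially the same identity as in the proof of Lemma~\ref{lemma: no sign problem zero variance}, except the sum is now restricted because some terms are excluded.

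Next, I would expand the unrestricted overlap using the decomposition $e^{-\Delta\tau H} = \sum_{\vec{x}} P(\vec{x}) B(\vec{x})$ from \eqref{eq:operator decomposition}:
\begin{equation*}
\braket{\Psi_\mathrm{tr}, e^{-\Delta\tau H}\Phi_k^{(n)}} = \sum_{\vec{x}} P(\vec{x}) \braket{\Psi_\mathrm{tr},B(\vec{x})\Phi_k^{(n)}}.
\end{equation*}
Subtracting the two displays, the terms with $\widetilde{P}_k^{(n)}(\vec{x})>0$ cancel, leaving exactly $-\sum_{\vec{x}:\widetilde{P}_k^{(n)}(\vec{x})=0} P(\vec{x}) \braket{\Psi_\mathrm{tr},B(\vec{x})\Phi_k^{(n)}}$, which is the equality claimed.

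The remaining task — and the only slightly delicate point — is the sign of this residual. Here I would unpack the definition of $\widetilde{P}_k^{(n)}$ from \eqref{eq:importance sampling pdf}: $\widetilde{P}_k^{(n)}(\vec{x})=0$ occurs precisely when $O_\mathrm{tr}(B(\vec{x})\phi_k^{(n)})=0$, i.e. $\braket{\Psi_\mathrm{tr},B(\vec{x})\phi_k^{(n)}}\le 0$ (assuming $P(\vec{x})>0$ and $O_\mathrm{tr}(\phi_k^{(n)})>0$, which holds for the active walker under consideration). Since the walker $\Phi_k^{(n)} = w_k^{(n)} \phi_k^{(n)}/O_\mathrm{tr}(\phi_k^{(n)})$ carries a nonnegative prefactor, the same sign inequality transfers: $\braket{\Psi_\mathrm{tr},B(\vec{x})\Phi_k^{(n)}}\le 0$ whenever $\widetilde{P}_k^{(n)}(\vec{x})=0$. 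Multiplying by $-P(\vec{x})\le 0$ and summing yields the claimed nonnegativity. I do not expect any serious obstacle here; the proof is essentially bookkeeping on the supports of $P$ and $\widetilde{P}_k^{(n)}$, with the sign check being the only step that uses the constrained-path definition of $O_\mathrm{tr}$ in an essential way.
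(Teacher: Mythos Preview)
Your proof is correct and follows essentially the same route as the paper: both split the full expansion $e^{-\Delta\tau H}=\sum_{\vec{x}}P(\vec{x})B(\vec{x})$ into the $\widetilde P_k^{(n)}(\vec{x})>0$ and $\widetilde P_k^{(n)}(\vec{x})=0$ parts, identify the first part with $\braket{\Psi_\mathrm{tr},\widetilde B_k^{(n)}(\vec{x})\Phi_k^{(n)}}$ via Lemma~\ref{lemma:zero-variance}, and read off the difference. You additionally spell out the sign argument (via $O_\mathrm{tr}$ and the nonnegative prefactor in $\Phi_k^{(n)}$), which the paper leaves implicit.
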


\begin{proof}
    When $ \braket{\Psi_\mathrm{tr},B(\vec{x})\Phi_k^{(n)}}\leq 0$ for some $\vec{x}$, there are values of $\vec{x}$ such that $Q_k^{(n)}(\vec{x})=0$. In this case, \eqref{eq:operator decomposition with cp approximation} can be expressed more explicitly as
    \begin{equation*}
    \begin{split}
        e^{-\Delta\tau H} &= \sum_{\vec{x}:Q_k^{(n)}(\vec{x})=0} P(\vec{x})B(\vec{x}) + \sum_{\vec{x}:Q_k^{(n)}(\vec{x})>0} P(\vec{x}) B(\vec{x}) \\
        &= \sum_{\vec{x}:Q_k^{(n)}(\vec{x})=0} P(\vec{x})B(\vec{x}) + \sum_{\vec{x}:Q_k^{(n)}(\vec{x})>0} Q_k^{(n)}(\vec{x}) \widetilde{B}_k^{(n)}(\vec{x}). 
    \end{split}
    \end{equation*}
Then
\begin{equation*}
\begin{split}
     &\quad  \braket{\Psi_\mathrm{tr}, e^{-\Delta\tau H}\Phi_k^{(n)}} \\
    &= \braket{\Psi_\mathrm{tr}, \sum_{\vec{x}:Q_k^{(n)}(\vec{x})=0} P(\vec{x})B(\vec{x}) \Phi_k^{(n)} + \sum_{\vec{x}:Q_k^{(n)}(x)>0} Q_k^{(n)}(\vec{x})\widetilde{B}_k^{(n)}(\vec{x})\Phi_k^{(n)}} \\
    &= \sum_{\vec{x}:Q_k^{(n)}(\vec{x})=0} P(\vec{x}) \braket{\Psi_\mathrm{tr},B(\vec{x}) \Phi_k^{(n)}} + \sum_{\vec{x}:Q_k^{(n)}(\vec{x})>0} Q_k^{(n)} (\vec{x})\braket{\Psi_\mathrm{tr},\widetilde{B}_k^{(n)}(\vec{x})\Phi_k^{(n)}} \\
    &= \sum_{\vec{x}:Q_k^{(n)}(\vec{x})=0} P(\vec{x}) \braket{\Psi_\mathrm{tr},B(\vec{x}) \Phi_k^{(n)}} +   \braket{\Psi_\mathrm{tr},\widetilde{B}_k^{(n)}(\vec{x})\Phi_k^{(n)}}
\end{split}
\end{equation*}
where the last equality follows from Lemma~\ref{lemma:zero-variance}. 
\end{proof}

In this case, the actual inner product $\braket{\Psi_\mathrm{tr},\widetilde{B}_k^{(n)}(\vec{x})\Phi_k^{(n)}}$ is larger than the expected value $\braket{\Psi_\mathrm{tr}, e^{-\Delta\tau H}\Phi_k^{(n)} }$, thus a bias towards the trial function $\Psi_\mathrm{tr}$ is introduced, which causes the sign problem.

\subsection{Convergence analysis}\label{section:convergence of cp-afqmc}
In this section, we present how importance sampling with a trial wavefunction facilitates the convergence of energy. We focus on the case where there is no sign problem as in Lemma~\ref{lemma: no sign problem zero variance}. We want to show that in this, having a trial wavefunction that is closer to the ground-state wavefunction gives a smaller variance on the energy estimator (Section~\ref{sec:convergence energy}). We further show in Section~\ref{sec:convergence walker} that unlike a typical power method, we cannot expect that the walkers converge to the ground-state wavefunction as the variance of the
individual walkers, 
controlled primarily by the chosen form of $B(\vec x)$, is too large. 

The results give us a better understanding of cp-AFQMC. Although it originates from the imaginary-time evolution \eqref{eq:ground state problem}, the wavefunction itself, represented as \eqref{eq:wavefunction representation}, remains a \emph{statistical} sampling 
of
the ground-state wavefunction. 
In general, it does not provide a direct wavefunction of well-controlled variance \cite{purwanto2004-brute-force-estimator}. 
However, we can still extract useful information out of it, for example, the ground-state energy. 

Suppose the Hamiltonian $H$ has eigenvalues $E_0 < E_1 \le E_2 \le \cdots \le E_{2^d-1}$, where we have assumed that the eigenspace of the smallest eigenvalue $E_0$ is non-degenerate. This guarantees that there is a unique lowest energy eigenvector. In this cae, the eigen-decomposition of $H$ is denoted as
\begin{equation*}
    H = E_0 \Psi_0 \Psi_0^T + E_1 \Psi_1 \Psi_1^T + \cdots + E_{2^d-1} \Psi_{2^d-1} \Psi_{2^d-1}^T, 
\end{equation*}
Here $\Psi_0$ is the ground state, and $\Psi_i$'s are excited states. The trial function can be decomposed using the eigenstates as 
\begin{equation}\label{eq:trial decompose}
    \Psi_\mathrm{tr} = c_0 \Psi_0 + c_\perp \Psi_\perp, 
\end{equation}
of which the second term $c_\perp \Psi_\perp$ represents the component of excited states, with $\Psi_\perp$ being a unit vector orthogonal to $\Psi_0$. The quantity $\abs{c_\perp / c_0}$ then characterizes how close $\Psi_\mathrm{tr}$ is to the ground state $\Psi_0$. In what follows, we study how the convergence of the energy and the walkers depends on $\abs{c_\perp / c_0}$. 

\subsubsection{Convergence analysis of the energy}\label{sec:convergence energy}

In this section, we analyze how the mixed energy estimator converges to the ground-state energy in one step of cp-AFQMC. We want to show that the variance of cp-AFQMC is controlled by $\abs{c_\perp/c_0}$. When $\abs{c_\perp/c_0}$ gets smaller, the variance of the energy gets smaller, i.e. the convergence of cp-AFQMC behaves like a vanilla power iteration. In an ideal case where $\abs{c_\perp/c_0}=0$, i.e., the trial function is exactly the ground-state wavefunction $\Psi_\mathrm{tr}=\Psi_0$, there is a zero-variance principle. Thus, we are motivated to improve the trial wavefunction in Algorithm~\ref{alg:cp-AFQMC+TT} to reduce the variance in ground-state energy estimation. 


We first state a lemma that characterizes the propagation of a walker when applying the exact operator $e^{-\Delta\tau H}$ instead of $\widetilde{B}_k^{(n)}(\vec{x})$.

\begin{lemma}\label{lemma:energy error decreasing new}
We further decompose the trial wavefunction as 
\begin{equation*}
    \Psi_\mathrm{tr} = c_0 \Psi_0 + c_1 \Psi_1 + \cdots + c_{2^d-1} \Psi_{2^d-1}, 
\end{equation*}
and decompose the walker as
\begin{equation}\label{eq:phi_k^n expansion}
    \Phi_k^{(n)} = c_0^\prime \Psi_0 + c_1^\prime \Psi_1 + \cdots + c_{2^d-1}^\prime \Psi_{2^d-1}. 
\end{equation}
Assume $c_0c_0^\prime \ne 0$. In addition, we assume that the trial wavefunction satisfies:
\begin{equation}\label{eq:assumption c_0 dominates}
    a_0 := \abs{\frac{c_\perp}{c_0}} = \frac{\sqrt{\sum_{j=1}^{2^d-1} c_j^2}}{\abs{c_0}} < 1,
\end{equation}
and
\begin{equation}\label{eq:assmption c_1 dominates}
    a_1 := \frac{\sqrt{\sum_{j=2}^{2^d-1} c_j^2(E_j-E_0)^2}}{c_1(E_1-E_0)} < 1. 
\end{equation}
Furthermore, for the walker $\Phi_k^{(n)}$, we denote
\begin{equation}\label{eq:walker angle}
\tan \theta_k^{(n)} :=\frac{\sqrt{\sum_{j=1}^{2^d-1}{c_j^\prime}^2}}{\vert c'_0\vert},\quad \tan \beta_k^{(n)} :=\frac{\sqrt{\sum_{j=2}^{2^d-1}{c_j^\prime}^2}}{\vert c'_1\vert}.
\end{equation}
Then, 
\begin{equation}\label{eq:lemma energy error decrease in exact power iteration}
    \abs{  \frac{\braket{\Psi_\mathrm{tr},He^{-\Delta\tau H}\Phi_k^{(n)}}}{\braket{\Psi_\mathrm{tr},e^{-\Delta\tau H}\Phi_k^{(n)}}}  - E_0} \le e^{-\Delta\tau(E_1-E_0)} \frac{1+a_0\tan\theta_k^{(n)}}{1-a_0\tan\theta_k^{(n)}}\frac{1+a_1\tan\beta_k^{(n)}}{1-a_1\tan\beta_k^{(n)}}\abs{  \frac{\braket{\Psi_\mathrm{tr},H\Phi_k^{(n)}}}{\braket{\Psi_\mathrm{tr},\Phi_k^{(n)}}}  - E_0}. 
\end{equation}
\end{lemma}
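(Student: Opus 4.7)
The plan is to compute both the pre- and post-propagation shifted energy estimators explicitly in the eigenbasis of $H$, and then bound the ratio of the two. First I would substitute the eigendecompositions $\Psi_\mathrm{tr}=\sum_j c_j \Psi_j$ and $\Phi_k^{(n)}=\sum_j c_j' \Psi_j$ into $\langle\Psi_\mathrm{tr},(H-E_0)\Phi_k^{(n)}\rangle/\langle\Psi_\mathrm{tr},\Phi_k^{(n)}\rangle$ and into the corresponding expression with $e^{-\Delta\tau H}$ inserted between the two vectors. Orthonormality kills cross terms, the $j=0$ contribution drops out of the numerator because $H-E_0$ annihilates $\Psi_0$, and so the ``before'' ratio reduces to $\sum_{j\geq 1}c_jc_j'(E_j-E_0)$ divided by $c_0c_0'+\sum_{j\geq 1}c_jc_j'$; pulling a common factor $e^{-\Delta\tau E_0}$ out of the ``after'' ratio turns it into the same expression but with each $c_jc_j'$ multiplied by $e^{-\Delta\tau(E_j-E_0)}$ in both numerator and denominator.

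Next I would form the quotient after/before and split it as a product of a \textit{numerator factor} (from the numerators, which only involve $j\geq 1$) and a \textit{denominator factor} (from the denominators, in which the $c_0c_0'$ term is present). For the numerator factor I pull out $e^{-\Delta\tau(E_1-E_0)}$, isolate the $j=1$ term $c_1c_1'(E_1-E_0)$ in both top and bottom, and bound the remaining tail $\sum_{j\geq 2}c_jc_j'(E_j-E_0)e^{-\Delta\tau(E_j-E_1)}$ in absolute value by $\sum_{j\geq 2}|c_j|(E_j-E_0)|c_j'|$ using $e^{-\Delta\tau(E_j-E_1)}\leq 1$. Applying Cauchy--Schwarz to this tail and invoking the very definitions of $a_1$ and $\tan\beta_k^{(n)}$ bounds the tail by $c_1(E_1-E_0)\,|c_1'|\,a_1\tan\beta_k^{(n)}$; a reverse triangle inequality gives the matching lower bound $|c_1c_1'|(E_1-E_0)(1-a_1\tan\beta_k^{(n)})$ for the denominator, so the numerator factor is at most $e^{-\Delta\tau(E_1-E_0)}(1+a_1\tan\beta_k^{(n)})/(1-a_1\tan\beta_k^{(n)})$. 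The same Cauchy--Schwarz plus reverse-triangle step applied to the denominator factor, with the $c_0c_0'$ term playing the role of the leading term and the $a_0,\tan\theta_k^{(n)}$ quantities controlling the excited-state tail, gives $(1+a_0\tan\theta_k^{(n)})/(1-a_0\tan\theta_k^{(n)})$. Multiplying the three contributions yields the claimed inequality \eqref{eq:lemma energy error decrease in exact power iteration}.

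The main obstacle I anticipate is sign and positivity bookkeeping: the products $c_jc_j'$ need not be of uniform sign, so absolute values have to be pushed inside the sums via the triangle inequality \emph{before} applying Cauchy--Schwarz; and the reverse triangle inequality produces a useful lower bound on each denominator only when $a_0\tan\theta_k^{(n)}<1$ and $a_1\tan\beta_k^{(n)}<1$. The assumptions \eqref{eq:assumption c_0 dominates}--\eqref{eq:assmption c_1 dominates} guarantee $a_0,a_1<1$, so for walkers sufficiently aligned with the ground state (small $\theta_k^{(n)}$ and $\beta_k^{(n)}$) these denominators are strictly positive and the bound is genuine. Once these sign and positivity checks are dispatched, the remaining manipulations are elementary algebra.
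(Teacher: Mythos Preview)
Your proposal is correct and follows essentially the same route as the paper: expand both ratios in the eigenbasis, form the quotient, split it into a numerator factor and a denominator factor, and bound each factor separately via triangle/reverse-triangle inequalities together with Cauchy--Schwarz, which is exactly how the paper arrives at \eqref{eq:first term of energy compare} and \eqref{eq:second term of energy compare}. Your explicit remark that the reverse-triangle step requires $a_0\tan\theta_k^{(n)}<1$ and $a_1\tan\beta_k^{(n)}<1$ is a useful clarification that the paper leaves implicit.
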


With this result, we can then analyze how the energy changes when we apply the actual propagator $\widetilde{B}_k^{(n)}(\vec{x})$ to a walker $\Phi_k^{(n)}$. 

\begin{theorem}\label{thm:energy convergence}
    Suppose there is no sign problem, as defined in Lemma~\ref{lemma: no sign problem zero variance}. With the same assumptions as Lemma~\ref{lemma:energy error decreasing new},
    \begin{multline}\label{eq:theorem energy convergence}
      \abs{\frac{\braket{\Psi_\mathrm{tr},H\widetilde{B}_k^{(n)}(\vec{x})\Phi_k^{(n)}}}{\braket{\Psi_\mathrm{tr},\widetilde{B}_k^{(n)}(\vec{x})\Phi_k^{(n)}}} -E_0} =e^{-\Delta\tau(E_1-E_0)} \frac{1+a_0\tan\theta_k^{(n)}}{1-a_0\tan\theta_k^{(n)}}\frac{1+a_1\tan\beta_k^{(n)}}{1-a_1\tan\beta_k^{(n)}}\abs{  \frac{\braket{\Psi_\mathrm{tr},H\Phi_k^{(n)}}}{\braket{\Psi_\mathrm{tr},\Phi_k^{(n)}}}  - E_0}  \\
       + \|H-E_0 I \|_2\frac{\|\widetilde{B}_k^{(n)}(\vec{x})-e^{-\Delta\tau H} \|_2}{e^{-\Delta\tau E_0}}  \frac{a_0\tan\theta_k^{(n)}}{1+a_0 e^{-\Delta(E_1-E_0)}\tan\theta_k^{(n)}}.
    \end{multline}
\end{theorem}
Proof of both Lemma~\ref{lemma:energy error decreasing new} and Theorem~\ref{thm:energy convergence} can be found in the appendix. 
We notice that the energy convergence consists of a deterministic power iteration part and a fluctuating part that is roughly $a_0\|H-E_0 I \|_2 \tan\theta_k^{(n)} = \abs{c_\perp/c_0}\|H-E_0 I \|_2 \tan\theta_k^{(n)}$ (when $a_0$ is small). Here $\|H-E_0 I \|_2$ resembles the energy scale, and $\tan\theta_k^{(n)}$ is a random variable that captures the deviation of a random walker from the ground state. While currently, we do not have a characterization of the variance of $\tan\theta_k^{(n)}$, if we assume that it has a bounded variance of $O(1)$, when $a_0$ gets small, i.e. the trial function gets closer to the ground-state wavefunction, the fluctuation term can be made small. In an ideal case where $a_0=0$ so that the trial function is exactly the ground-state wavefunction $\Psi_\mathrm{tr}=\Psi_0$, there is no fluctuation in the energy estimation. As a matter of fact, we have
\begin{equation*}
    \frac{\braket{\Psi_0,H\Psi}}{\braket{\Psi_0,\Psi}} = E_0, 
\end{equation*}
for any wavefunction $\Psi$.

\subsubsection{Convergence analysis of the walkers}\label{sec:convergence walker}

Now, we analyze the convergence of a particular walker $\Phi_k^{(n)}$ to the ground state, by keeping track of the angle between $\Phi_k^{(n)}$ and the ground state $\Psi_0$. As shown later in Theorem~\ref{thm: walker convergence}, unlike the results in energy convergence, even if the trial function is exactly the ground-state wavefunction, there is still an error term that does not vanish. 
In other words, the wavefunction represented by the ensemble of the walkers \eqref{eq:wavefunction representation} has a large variance, and does not converge to
the ground state reliably, regardless of the 
trial wavefunction. 
The convergence to the 
ground state is only in the sense of Monte Carlo sampling, as a statistical ensemble. Our algorithm addresses this limitation by reconstructing a wavefunction in TT form using information extracted from the random walkers. 

We denote the tangent of the angle between $\Phi_k^{(n)}$ and $\Psi_0$ as $\tan\theta_k^{(n)}$ as in \eqref{eq:walker angle}. Similarly, the tangent of the angle between $\widetilde{B}_k^{(n)}(\vec{x})\Phi_k^{(n)}$ ($\widetilde{B}_k^{(n)}(\vec{x})$ is defined in \eqref{eq:tilde B}) and $\Psi_0$ is denoted as $\tan \theta_k^{(n+1)}(\vec{x})$. Our goal is to show that there is \emph{no contraction} in terms of this angle in 1-step of cp-AFQMC. To make the notations more concise, we rewrite the angle defined in \eqref{eq:walker angle} as 
\begin{equation*}
\tan \theta_k^{(n+1)}(\vec{x}) =  \frac{\|\mathcal{P}_\perp \widetilde{B}_k^{(n)}\Phi_k^{(n)} \|}{\vert\braket{\Psi_0,\widetilde{B}_k^{(n)}(\vec{x})\Phi_k^{(n)}}\vert},\quad \tan \theta_k^{(n)} =  \frac{\|\mathcal{P}_\perp \Phi_k^{(n)} \|}{\vert\braket{\Psi_0,\Phi_k^{(n)}}\vert}, 
\end{equation*}
where $\mathcal{P}_\perp$ is the orthogonal projector onto the space spanned by $\{\Psi_1,\ldots \Psi_{2^d-1}\}$. We want to show in the following that $\tan \theta_k^{(n+1)}(\vec{x})<\tan \theta_k^{(n)}$ up to some noise terms. However, the noise terms will not vanish even for very good trial wavefunction. 

\begin{theorem}\label{thm: walker convergence}
    Suppose there is no sign problem, as defined in Lemma~\ref{lemma: no sign problem zero variance}. 
With the same assumptions as Lemma~\ref{lemma:energy error decreasing new}, we further assume that
\begin{equation*}
    a_0 < e^{-\Delta\tau E_0} \frac{\vert{\braket{\Psi_0,\Phi_k^{(n)}}}\vert}{\vert{\braket{\Psi_\perp,(\widetilde{B}_k^{(n)}(\vec{x}) - e^{-\Delta\tau H}) \Phi_k^{(n)}}\vert}}, 
\end{equation*}
Then, 
\begin{multline*}
\tan \theta_k^{(n+1)}(\vec{x}) \leq  e^{-\Delta \tau (E_1-E_0)}\tan \theta_k^{(n)} + e^{\Delta \tau E_0}\left(a_0 \tan \theta_k^{(n)}  +1\right)O\left( \frac{ \|\widetilde{B}_k^{(n)}(\vec{x}) - e^{-\Delta\tau H}\|}{\vert\braket{\Psi_0,\Phi_k^{(n)}}\vert}  \right)\\
    \quad + a_0 e^{2\Delta \tau E_0}O\left( \frac{ \|\widetilde{B}_k^{(n)}(\vec{x}) - e^{-\Delta\tau H}\|}{\vert\braket{\Psi_0,\Phi_k^{(n)}}\vert}  \right)^2. 
\end{multline*}

\end{theorem}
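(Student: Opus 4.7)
The plan is to treat the numerator $\|\mathcal{P}_\perp \widetilde{B}_k^{(n)}(\vec{x})\Phi_k^{(n)}\|$ and the denominator $|\braket{\Psi_0,\widetilde{B}_k^{(n)}(\vec{x})\Phi_k^{(n)}}|$ of $\tan\theta_k^{(n+1)}(\vec{x})$ separately. In each case I write $\widetilde{B}_k^{(n)}(\vec{x}) = e^{-\Delta\tau H} + (\widetilde{B}_k^{(n)}(\vec{x}) - e^{-\Delta\tau H})$ and control the stochastic discrepancy via its operator norm $\epsilon := \|\widetilde{B}_k^{(n)}(\vec{x}) - e^{-\Delta\tau H}\|$. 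I then divide the two estimates, expand $1/(1-x)$ as a geometric series, and collect terms according to their order in $\epsilon$.

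For the numerator, since $\mathcal{P}_\perp$ commutes with $H$ and hence with $e^{-\Delta\tau H}$, we have $\mathcal{P}_\perp e^{-\Delta\tau H}\Phi_k^{(n)} = e^{-\Delta\tau H}\mathcal{P}_\perp \Phi_k^{(n)}$, and on the range of $\mathcal{P}_\perp$ the operator $e^{-\Delta\tau H}$ has spectral norm $e^{-\Delta\tau E_1}$. Together with the triangle inequality this gives
\[
\|\mathcal{P}_\perp \widetilde{B}_k^{(n)}(\vec{x})\Phi_k^{(n)}\| \le e^{-\Delta\tau E_1}\|\mathcal{P}_\perp \Phi_k^{(n)}\| + \epsilon\,\|\Phi_k^{(n)}\|.
\]
For the denominator, the crucial observation is that Lemma~\ref{lemma: no sign problem zero variance} yields the exact identity $\braket{\Psi_\mathrm{tr},\widetilde{B}_k^{(n)}(\vec{x})\Phi_k^{(n)}} = \braket{\Psi_\mathrm{tr},e^{-\Delta\tau H}\Phi_k^{(n)}}$. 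Substituting $\Psi_\mathrm{tr} = c_0\Psi_0 + c_\perp\Psi_\perp$ into both sides and solving for the $\Psi_0$-overlap produces
\[
\braket{\Psi_0,\widetilde{B}_k^{(n)}(\vec{x})\Phi_k^{(n)}} = e^{-\Delta\tau E_0}\braket{\Psi_0,\Phi_k^{(n)}} - \frac{c_\perp}{c_0}\braket{\Psi_\perp,(\widetilde{B}_k^{(n)}(\vec{x}) - e^{-\Delta\tau H})\Phi_k^{(n)}}.
\]
The hypothesis on $a_0$ is calibrated exactly so the first term dominates; taking absolute values with $|c_\perp/c_0| = a_0$ and Cauchy--Schwarz yields the positive lower bound $e^{-\Delta\tau E_0}|\braket{\Psi_0,\Phi_k^{(n)}}| - a_0\epsilon\|\Phi_k^{(n)}\|$.

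Combining the two bounds, factoring $e^{-\Delta\tau E_0}|\braket{\Psi_0,\Phi_k^{(n)}}|$ out of numerator and denominator, and introducing $\delta := e^{\Delta\tau E_0}\epsilon\,\|\Phi_k^{(n)}\|/|\braket{\Psi_0,\Phi_k^{(n)}}|$ reduces the problem to
\[
\tan\theta_k^{(n+1)}(\vec{x}) \le \frac{e^{-\Delta\tau(E_1-E_0)}\tan\theta_k^{(n)} + \delta}{1 - a_0\delta}.
\]
Writing $\frac{1}{1-a_0\delta} = 1 + \frac{a_0\delta}{1-a_0\delta}$ and distributing produces the deterministic contraction term $e^{-\Delta\tau(E_1-E_0)}\tan\theta_k^{(n)}$, a first-order piece proportional to $(1 + a_0 e^{-\Delta\tau(E_1-E_0)}\tan\theta_k^{(n)})\delta$ that I bound by $(1+a_0\tan\theta_k^{(n)})\cdot e^{\Delta\tau E_0}\cdot O(\epsilon/|\braket{\Psi_0,\Phi_k^{(n)}}|)$, and a second-order piece of size $a_0 e^{2\Delta\tau E_0}\cdot O((\epsilon/|\braket{\Psi_0,\Phi_k^{(n)}}|)^2)$. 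The main bookkeeping obstacle will be absorbing the factor $\|\Phi_k^{(n)}\|/|\braket{\Psi_0,\Phi_k^{(n)}}| = \sqrt{1+\tan^2\theta_k^{(n)}}$ hidden in $\delta$ into the $O(\cdot)$ and the prefactor $(1+a_0\tan\theta_k^{(n)})$ cleanly, without letting residual $1/(1-a_0\delta)$ denominators contaminate the split between first- and second-order terms.
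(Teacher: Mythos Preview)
Your proposal is correct and follows essentially the same route as the paper: bound the numerator $\|\mathcal{P}_\perp\widetilde{B}_k^{(n)}(\vec{x})\Phi_k^{(n)}\|$ by splitting off $e^{-\Delta\tau H}$, derive the exact identity for $\braket{\Psi_0,\widetilde{B}_k^{(n)}(\vec{x})\Phi_k^{(n)}}$ from Lemma~\ref{lemma: no sign problem zero variance} and the decomposition $\Psi_\mathrm{tr}=c_0\Psi_0+c_\perp\Psi_\perp$, then divide and expand $1/(1-a_0\delta)$. You are in fact more careful than the paper about the extra factor $\|\Phi_k^{(n)}\|/|\braket{\Psi_0,\Phi_k^{(n)}}|=\sqrt{1+\tan^2\theta_k^{(n)}}$; the paper simply drops the $\|\Phi_k^{(n)}\|$ and absorbs everything into the $O(\cdot)$, so your flagged ``bookkeeping obstacle'' is precisely the step the paper sweeps under the big-$O$.
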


Proof of Theorem~\ref{thm: walker convergence} can be found in the appendix. 
As we can see, even when $a_0=0$, i.e. the trial wavefunction is exactly the ground-state wavefunction, there is still an error term $O\left(\frac{ \|\widetilde{B}_k^{(n)}(\vec{x}) - e^{-\Delta\tau H}\|}{\vert\braket{\Psi_0,\Phi_k^{(n)}}\vert}\right)$. Thus, the individual walker is not guaranteed to get closer to the ground state. 
In fact, in general, one needs an exponential number of walkers to get $\frac{ \|\widetilde{B}_k^{(n)}(\vec{x}) - e^{-\Delta\tau H}\|}{\vert\braket{\Psi_0,\Phi_k^{(n)}}\vert}<1$. This is due to the fact that, generally, one needs to sample an exponential number of $\widetilde{B}_k^{(n)}(\vec{x})$ to approximate $\exp(-\Delta \tau H)$. 
In other words, when sampling a multi-dimensional function, we can evaluate various properties with a polynomial cost, but to map out the function to pre-specified details by relying on the sample positions can require exponential cost.

\section{Numerical Results}\label{sec:numerical results}

In this section, we apply our proposed algorithm on transverse-field Ising (TFI) systems \cite{fisher1998distributions,chakrabarti2008quantum}. The Hamiltonian of the TFI model is given by
\begin{equation}\label{eq:TFI hamiltonian}
    H = -g\sum_{i=1}^d \sigma_i^x - \sum_{\langle i,j \rangle} \sigma_i^z\sigma_j^z := H_1 + H_2, 
\end{equation}
where $g$ is the strength of the magnetic field, $d$ is the total number of spins, $\langle i,j \rangle$ represents pairs of nearest neighbors, $\sigma^x,\sigma^z\in\mathbb{R}^{2\times2}$ are Pauli matrices \cite{gull1993imaginary}, and we denote
\begin{equation*}
    \sigma_i^{x,z} = I_2 \otimes \cdots \otimes \underbrace{\sigma^{x,z}}_{i\text{-th term}} \otimes \cdots \otimes I_2 \in \mathbb{R}^{2^d\times 2^d}. 
\end{equation*}
For more details about TFI system, we refer the reader to the appendix. 
For the TFI system, 
the cp-AFQMC can be made 
free of sign problem; a detailed analysis is provided in the supplementary appendix. 
Without a sign problem, the constrained-path approximation will not incur 
any systematic error, 
and the trial wavefunction $\Psi_\mathrm{tr}$ purely introduces importance sampling. Depending on the quality of $\Psi_\mathrm{tr}$, the efficiency as well as other controllable biases in the computation are affected, including those from 
fixed time step $\Delta\tau$ and 
finite population size $N$. See further discussions in the appendix. 
These provide an excellent test ground to illustrate and study 
our algorithm.
As we show below, the algorithm leads to 
significant improvement across the board 
by allowing better importance sampling. 
Furthermore, by combining the features of AFQMC 
and the density matrix renormalization group (DMRG) \cite{PhysRevLett.69.2863,fishman2022itensor}, the algorithm shows advantages and
extends the reach beyond  each of its 
components separately.


\subsection{cp-AFQMC-re-anchoring for transverse-field Ising model}\label{sec:cp-afqmc on tfi}

In this section, we mainly compare our proposed algorithm, cp-AFQMC-re-anchoring, with vanilla cp-AFQMC for both 1D and 2D TFI systems, and illustrate that cp-AFQMC-re-anchoring significantly improves the 
accuracy and efficiency.

In all of our experiments, the step size of the imaginary-time evolution is set to $\Delta\tau=0.01$. Unless otherwise noted, we have verified that the remaining Trotter error is smaller than our statistical error bar or the signal for comparison.
We set the initial trial wavefunction to be the fully disordered state
\begin{equation}\label{eq:initial trial}
    \Psi_\mathrm{tr,0} = \frac{1}{\sqrt{2^d}} \cdot
    \bracket{
    1,1
    }^T \otimes \cdots \otimes 
    \bracket{
    1,1
    }^T, 
\end{equation}
which can also be interpreted as the eigenvector corresponding to the smallest eigenvalue of $H_1$ (The first term of the Hamiltonian in \eqref{eq:TFI hamiltonian}). 
We implement TT-sketching to update the trial wavefunction every $50$ steps. At the $2000$-th step, we stop updating the trial wavefunction and keep using the latest one. We set the number of walkers to scale linearly with the number of spins. For systems with $16,32,64$ and $96$ spins, the numbers of walkers are set to $2000,4000,8000$ and $12000$, respectively. 

In the TT-sketching procedure, we take $\delta=0.1$ in \eqref{eq:cluster basis} for the sketching functions. For systems with $16$ or $32$ spins, the rank of the sketching functions is set to $60$. For systems with $64$ or $96$ spins, we set the rank to be $150$. The target rank of the sketched TT is set to $r=4$ in all experiments.





The 1D transverse-field Ising model with periodic boundary undergoes a quantum phase transition \cite{sachdev1999quantum} at $g=1$ (see \eqref{eq:TFI hamiltonian}), which is demonstrated in Figure~\ref{fig:phase transition} using the 1D system with $16$ spins as an example. To be more specific, We run our algorithm on the 1D system with $16$ spins with different transverse fields, and measure the ground-state energy $E_0(g)$. We also use Hellmann-Feynman theorem \cite{politzer2018hellmann} to calculate the energy of the transverse field by
\begin{equation*}
    \frac{\braket{\Psi_0(g),(-\sum_{i=1}^d\sigma_i^x)\Psi_0(g)}}{\braket{\Psi_0(g),\Psi_0(g)}} = \frac{\mathrm{d}}{\mathrm{d}g} E_0(g), 
\end{equation*}
where $\Psi_0(g)$ represents the ground-state wavefunction of the TFI system with the transverse-field strength being $g$. 
Numerically, this derivative is approximated by finite difference: 
\begin{equation*}
    \frac{\mathrm{d}}{\mathrm{d}g} E_0(g) \approx \frac{E_0(g+\Delta g) - E_0(g)}{\Delta g}, 
\end{equation*}
where we pick $\Delta g=0.01$ in practice. 

\begin{figure}[!htbp]
\centering
\subfigure[Ground-state energy]{
\begin{minipage}{0.49\textwidth}
    \centering
    \includegraphics[height=4cm, width=6cm]{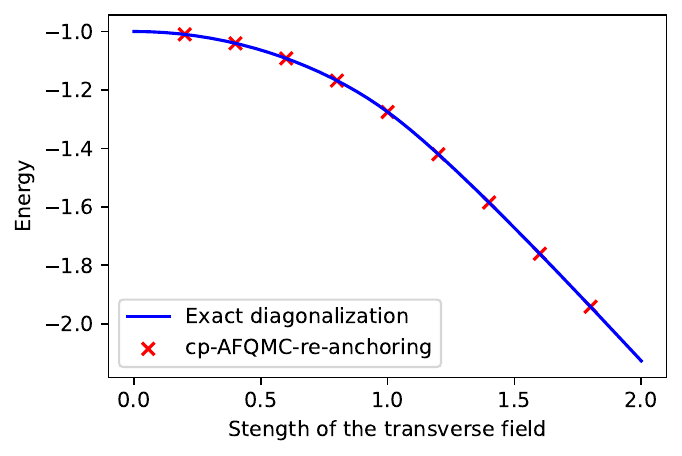}
    \vspace{0.2cm}
\end{minipage}
}
\hspace{-1.5cm}
\subfigure[Transverse-field energy]{
\begin{minipage}{0.49\textwidth}
    \centering
    \includegraphics[height=4cm, width=6cm]{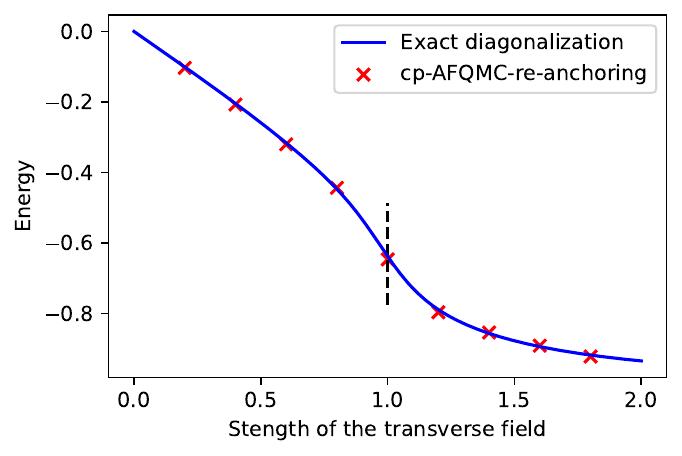}
    \vspace{0.2cm}
\end{minipage}
}
\vspace{-0.25cm}
\caption{Results of cp-AFQMC-re-anchoring for $16$ spins in 1D with different transverse-field strength $g$. (a): ground-state energy; (b): transverse-field energy, where the critical point $g=1$ is illustrated with the black dashed line. }
\label{fig:phase transition}
\end{figure}

Since the quantum phase transition at the critical point $g=1$ makes it the most difficult case to deal with, we also compare cp-AFQMC-re-anchoring and the vanilla cp-AFQMC on larger systems, including 1D system with $d=32,64$ and $96$ spins, at $g=1$. The ground-state energy of 1D periodic system can be calculated analytically \cite{pfeuty1970one}, and the ground-state wavefunction can be well approximated by DMRG. 

In addition, we also test a quasi-1D system, where the spins are arranged on a $4\times16$ lattice. We incorporate periodic boundary on the $4$-site axis, and open boundary on the $16$-site axis. Thus, it appears like a cylinder. The transverse field is also set to $g=1$. Despite the fact that the spins are arranged on a 2D lattice, the ground-state energy and ground-state wavefunction can still be well approximated by DMRG, due to the specific type of boundary conditions. Thus, we still use DMRG result as a reference. 

As illustrated in Figure~\ref{fig:energy}, for each system, the energy obtained by vanilla cp-AFQMC is of the order $O(10^{-3})$, while cp-AFQMC-re-anchoring can achieve the accuracy of order $O(10^{-5})$ or even smaller. The relative error of the energy is shown in Table~\ref{tab:error}. In addition, the overlap between our sketched trial wavefunction and the ground-state wavefunction is shown in Figure~\ref{fig:overlap}. For 1D systems, the sketched trial wavefunction in the form of a rank-$4$ TT can approximate the ground-state wavefunction quite well. For systems with $32$, $64$ and $96$ spins, the overlap can achieve around $0.9$, $0.8$, and $0.7$, respectively. For the $4\times 16$ cylinder system, the overlap converges to around $0.88$. 
As the quality of the trial wavefunction is improved, all sources of error in AFQMC will be reduced, resulting in smaller statistical errors in the estimators and reduced bias (population control and Trotter). This is clearly seen in the results in Table~\ref{tab:error}.

\begin{figure}[!htbp]
\centering
\subfigure[]{
\begin{minipage}{0.24\textwidth}
    \centering
    \includegraphics[width=\linewidth]{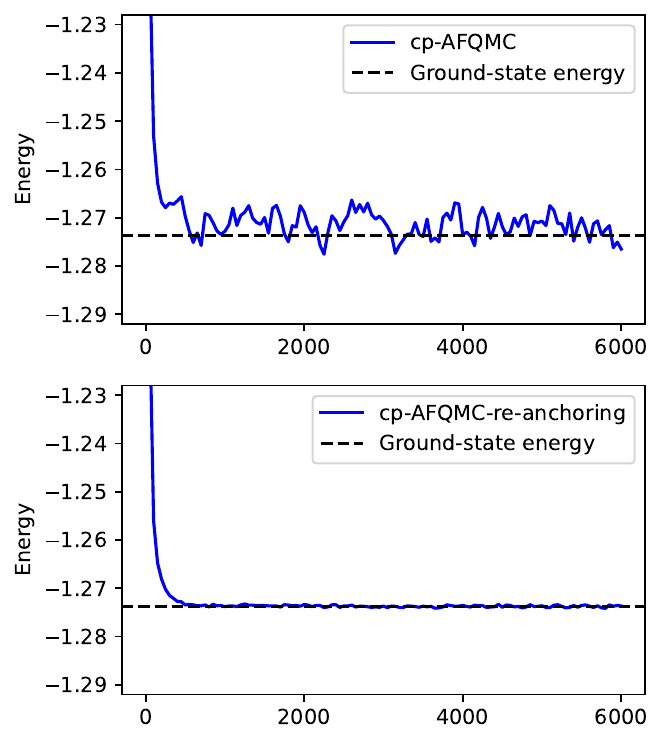}
    \vspace{-0.4cm}
\end{minipage}
}
\hspace{-0.5cm}
\subfigure[]{
\begin{minipage}{0.24\textwidth}
    \centering
    \includegraphics[width=\linewidth]{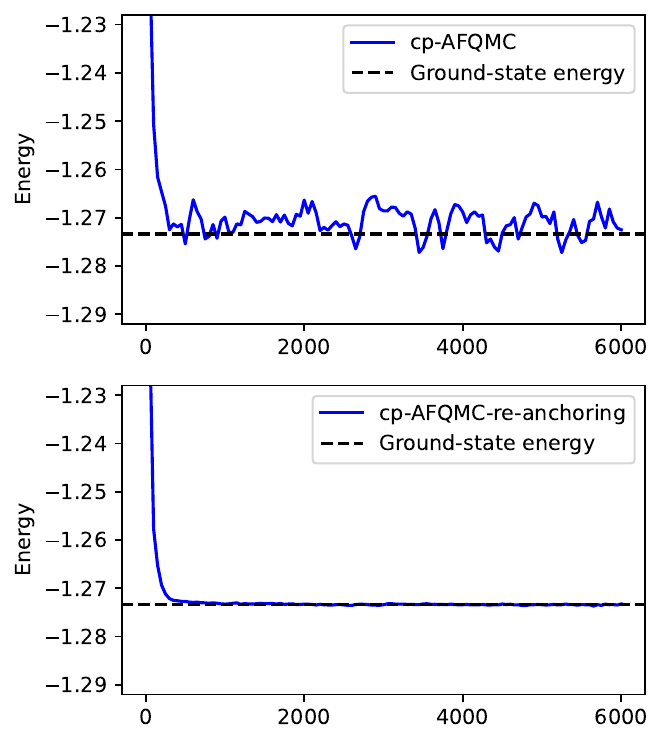}
    \vspace{-0.4cm}
\end{minipage}
}
\hspace{-0.5cm}
\subfigure[]{
\begin{minipage}{0.24\textwidth}
    \centering
    \includegraphics[width=\linewidth]{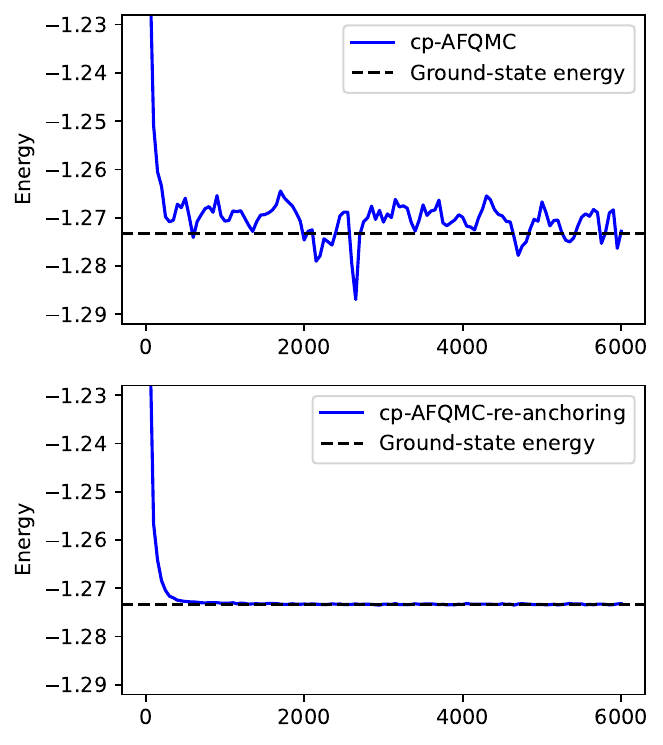}
    \vspace{-0.4cm}
\end{minipage}
}
\hspace{-0.5cm}
\subfigure[]{
\begin{minipage}{0.24\textwidth}
    \centering
    \includegraphics[width=\linewidth]{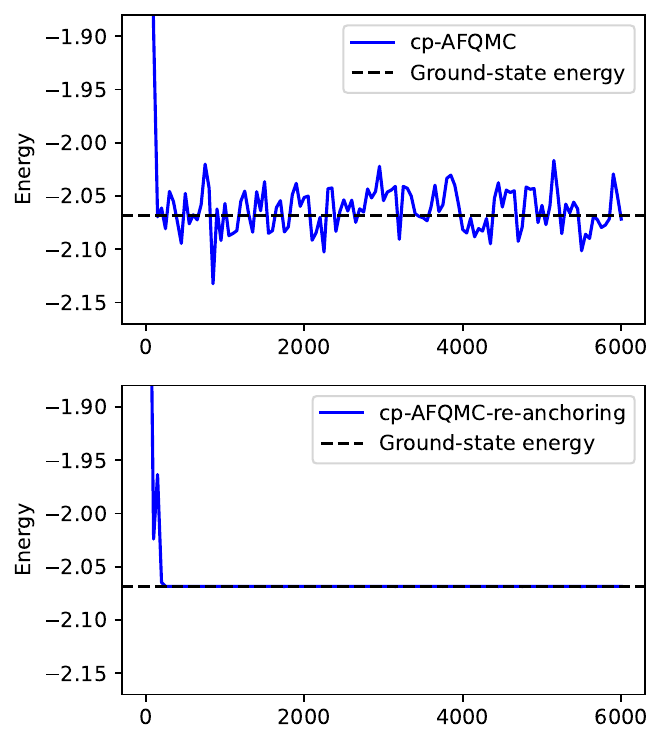}
    \vspace{-0.4cm}
\end{minipage}
}
\vspace{-0.4cm}
\caption{Comparison between the energy convergence of cp-AFQMC and cp-AFQMC-re-anchoring at $g=1$. From (a) to (d): $32$ spins in 1D; $64$ spins in 1D; $96$ spins in 1D; $4\times16$ spins in 2D (cylinder). }
\label{fig:energy}
\end{figure}

\begin{figure}[!htbp]
\centering
\subfigure[]{
\begin{minipage}{0.24\textwidth}
    \centering
    \includegraphics[width=\linewidth]{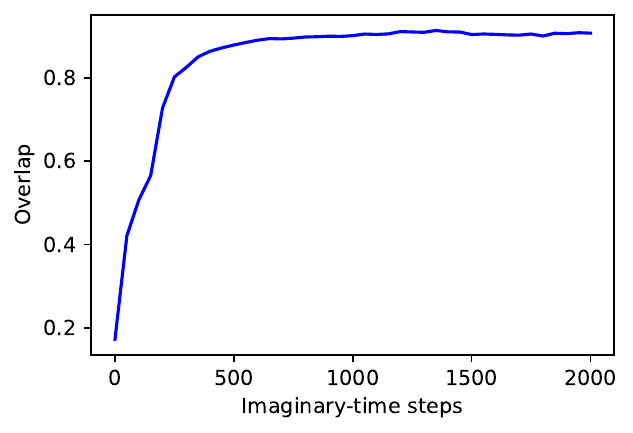}
    \vspace{-0.4cm}
\end{minipage}
}
\hspace{-0.5cm}
\subfigure[]{
\begin{minipage}{0.24\textwidth}
    \centering
    \includegraphics[width=\linewidth]{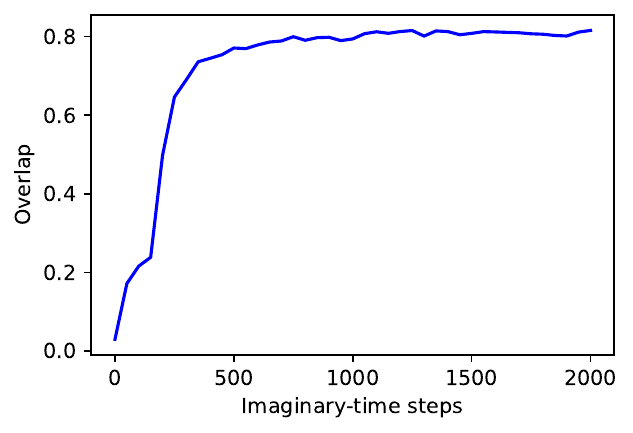}
    \vspace{-0.4cm}
\end{minipage}
}
\hspace{-0.5cm}
\subfigure[]{
\begin{minipage}{0.24\textwidth}
    \centering
    \includegraphics[width=\linewidth]{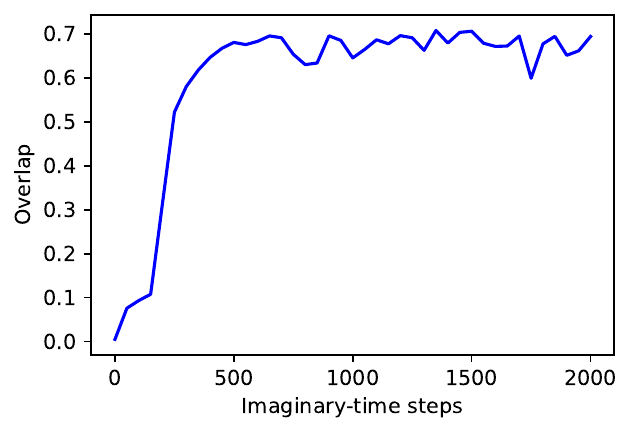}
    \vspace{-0.4cm}
\end{minipage}
}
\hspace{-0.5cm}
\subfigure[]{
\begin{minipage}{0.24\textwidth}
    \centering
    \includegraphics[width=\linewidth]{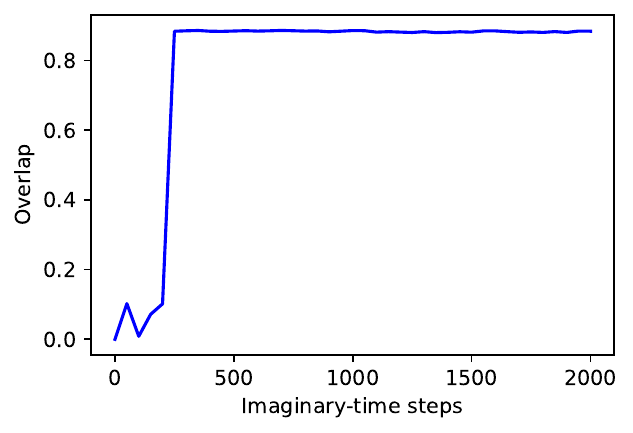}
    \vspace{-0.4cm}
\end{minipage}
}
\vspace{-0.4cm}
\caption{The overlap between the ground-state wavefunction and the sketched wavefunction from cp-AFQMC-re-anchoring, in the form of a rank-$4$ TT. From (a) to (d): $32$ spins in 1D; $64$ spins in 1D; $96$ spins in 1D; $4\times16$ spins in 2D (cylinder). }
\label{fig:overlap}
\end{figure}

\begin{table}[htbp!]
\centering
\begin{tabular}{|c|c|c|}
\hline
 & cp-AFQMC & cp-AFQMC-re-anchoring \\
\hline
$32$ spins 1D & $(+1.35\pm0.31)\times10^{-3}$ & $(+0.44\pm2.43)\times10^{-5}$ \\
\hline
$64$ spins 1D & $(+1.88\pm0.37)\times10^{-3}$ & $(+0.77\pm1.07)\times10^{-5}$ \\
\hline
$96$ spins 1D & $(+2.49\pm0.33)\times10^{-3}$ & $(-4.95\pm8.94)\times10^{-6}$ \\
\hline
$4\times16$ spins 2D (cylinder) & $(+4.18\pm1.42)\times10^{-3}$ & $(+0.50\pm1.46)\times10^{-6}$\\
\hline

\end{tabular}
\vspace{-0.25cm}

\caption{Relative error of the energy. 
The new algorithm leads to a reduction of 
statistical error by more than two orders of magnitude, or $\mathcal{O}(10,000)$ 
reduction
in computer time. Systematic errors are also reduced.
The systematic errors 
still present in 
cp-AFQMC comes from the fixed imaginary-time step and the finite population size, which can be eliminated by extrapolating. A detailed discussion is provided in the appendix. }
\label{tab:error}
\end{table}

As an example of a 2D system, we test our algorithm on the $11\times11$ lattice with open boundary at the quantum critical point $g\approx 3.044$ \cite{blote2002cluster}. In our experiments, we take $g=3$. The results are shown in Figure~\ref{fig:11*11}. 
For such a 2D system, there is no analytic solution for the ground-state energy anymore. A reference for the ground-state energy given by DMRG is $-3.17210 \pm 1\times 10^{-5}$ \cite{lubasch2014algorithms}, while the energy estimated by our method is $-3.17212 \pm 4\times 10^{-5}$. 

\begin{figure}[!htbp]
\centering
\subfigure[Vanilla cp-AFQMC]{
\begin{minipage}{0.49\textwidth}
    \centering
    \includegraphics[width=0.8\linewidth]{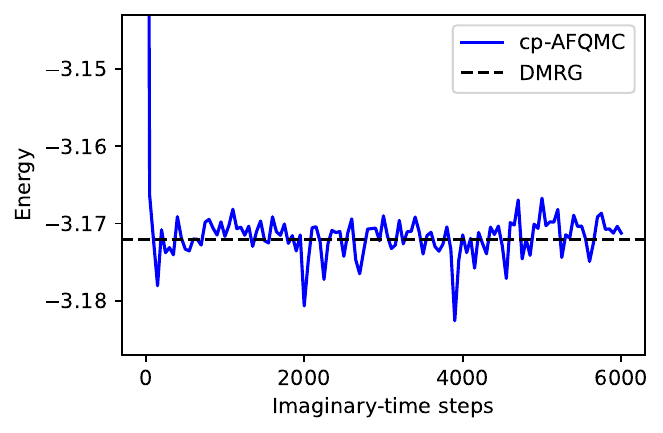}
    \vspace{0cm}
\end{minipage}
}
\hspace{-1.8cm}
\subfigure[cp-AFQMC-re-anchoring]{
\begin{minipage}{0.49\textwidth}
    \centering
    \includegraphics[width=0.8\linewidth]{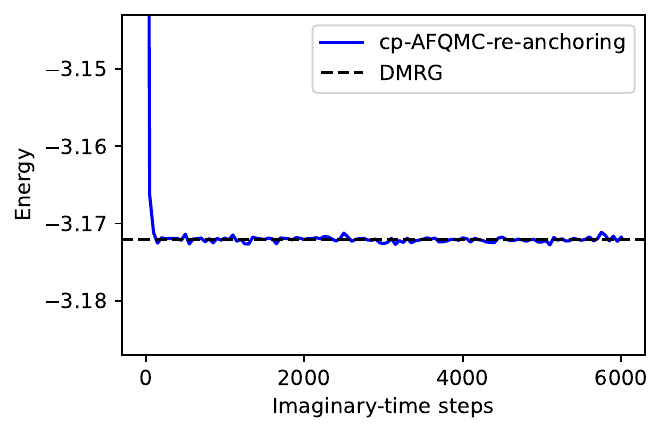}
    \vspace{0.cm}
\end{minipage}
}
\vspace{-0.4cm}
\caption{Energy convergence of (a) vanilla cp-AFQMC and (b) cp-AFQMC-re-anchoring for $11\times11$ system with open boundary at $g=3$. The reference ground-state energy, shown as the black dashed line, is obtained from DMRG \cite{lubasch2014algorithms}. }
\label{fig:11*11}
\end{figure}

\subsection{Comparison with DMRG}\label{sec:compare with DMRG}


Finally, we demonstrate that our algorithm allows the calculation of the ground-state energy to go beyond DMRG. The first comparison is conducted on the TFI system, where the spins are arranged on a $4\times4$ lattice in 2D with a periodic boundary condition. Exact diagonalization can be performed for such a system, providing a reliable reference for comparison. 
We begin by focusing on the wavefunctions obtained from DMRG and cp-AFQMC-re-anchoring at various 
transverse-field strengths $g$. The TT-rank of the wavefunctions is set to $4$. As illustrated in Figure~\ref{fig:overlap compare dmrg}, the overlap between the DMRG wavefunction and the ground state exhibits a sharp transition as the transverse field $g$ changes. The transition occurs in the interval from $2.0$ to $2.6$. For $g$ larger than the transition interval, the DMRG wavefunction closely approximates the ground state, achieving an overlap of around $0.98$. However, when $g$ is smaller than the transition interval, DMRG does not recover the ground-state wavefunction, with an overlap only around $0.7$. Our algorithm effectively resolves this issue. The sketched wavefunction from the cp-AFQMC-re-anchoring procedure consistently recovers the ground state for various $g$, maintaining an overlap above $0.96$ throughout. 

\begin{figure}[!htbp]
\centering
\subfigure[Wavefunction overlap]{
\begin{minipage}{0.49\textwidth}
    \centering
    \includegraphics[height=4cm, width=6cm]{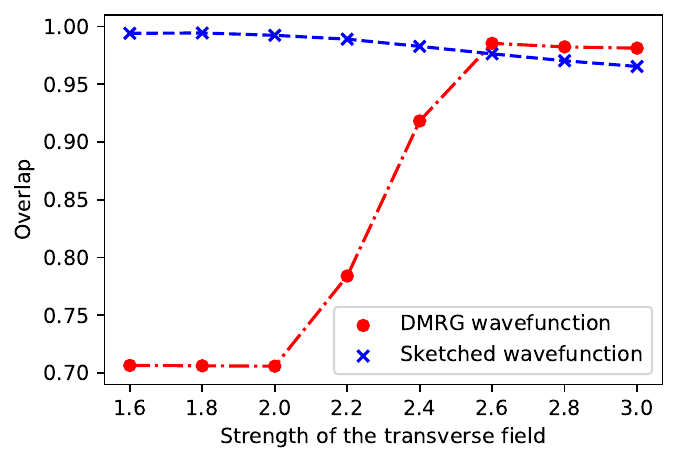}
    \vspace{0.1cm}
    \label{fig:overlap compare dmrg}
\end{minipage}
}
\hspace{-1.8cm}
\subfigure[Energy estimation error]{
\begin{minipage}{0.49\textwidth}
    \centering
    \includegraphics[height=4cm, width=6cm]{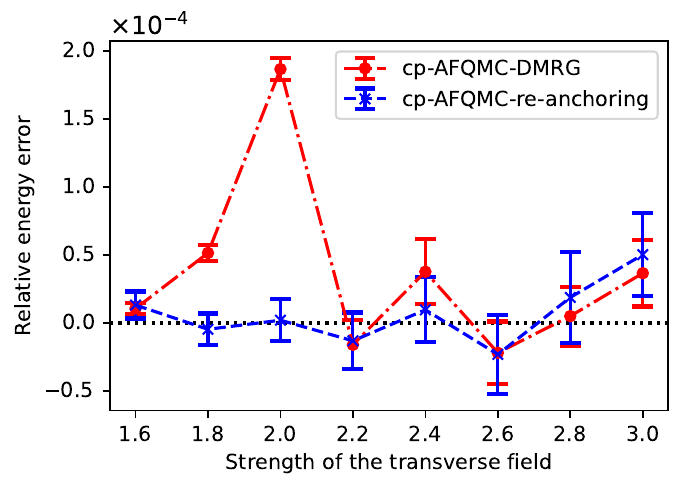}
    \vspace{0.1cm}
    \label{fig:qmc energy compare}
\end{minipage}
}
\vspace{-0.4cm}
\caption{Comparison between cp-AFQMC-re-anchoring and DMRG for the $4\times4$ system with periodic boundary with different transverse-field strengths. (a): comparison of the overlap between the ground-state wavefunction and the wavefunctions obtained by DMRG and TT-sketching; (b): comparison of the relative error in ground-state energy estimation by cp-AFQMC-DMRG and cp-AFQMC-re-anchoring. }
\end{figure}


In addition to the fact that our sketched wavefunction generally provides a better approximation of the ground state, cp-AFQMC-re-anchoring can also yield a more accurate estimation of the ground-state energy than cp-AFQMC with DMRG wavefunction as the trial function (cp-AFQMC-DMRG). 
We take a closer look at $g=2.0$, where the two algorithms exhibit a significant difference. The overlap between the ground-state and the sketched wavefunction during the cp-AFQMC-re-anchoring procedure is shown in Figure~\ref{fig:4*4 overlap}. 
In terms of the ground-state energy, Figure~\ref{fig:4*4 energy} clearly shows that cp-AFQMC-re-anchoring provides a more accurate estimation compared to cp-AFQMC-DMRG. Specifically, the relative energy error for cp-AFQMC-re-anchoring is $(0.59\pm1.57) \times 10^{-5}$, while the error for cp-AFQMC-DMRG is $(+1.87 \pm 0.08) \times 10^{-4}$, which exhibits a substantial bias.

\begin{figure}[!htbp]
\centering
\subfigure[Wavefunction overlap]{
\begin{minipage}{0.49\textwidth}
    \centering
    \includegraphics[height=4cm, width=6cm]{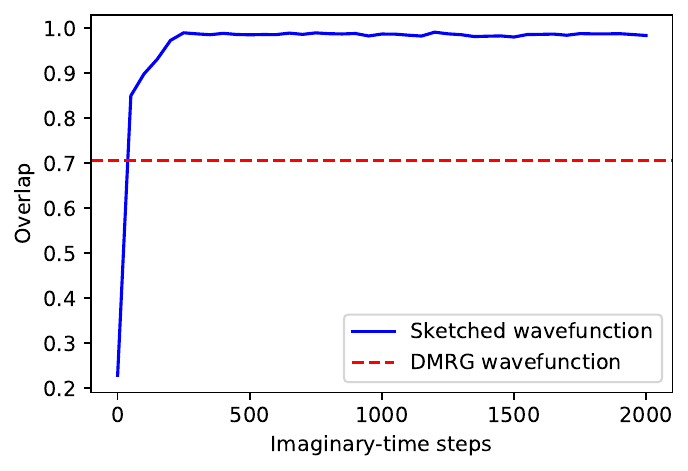}
    \label{fig:4*4 overlap}
\end{minipage}
}
\hspace{-1.5cm}
\subfigure[Energy convergence]{
\begin{minipage}{0.49\textwidth}
    \centering
    \includegraphics[height=4cm, width=6cm]{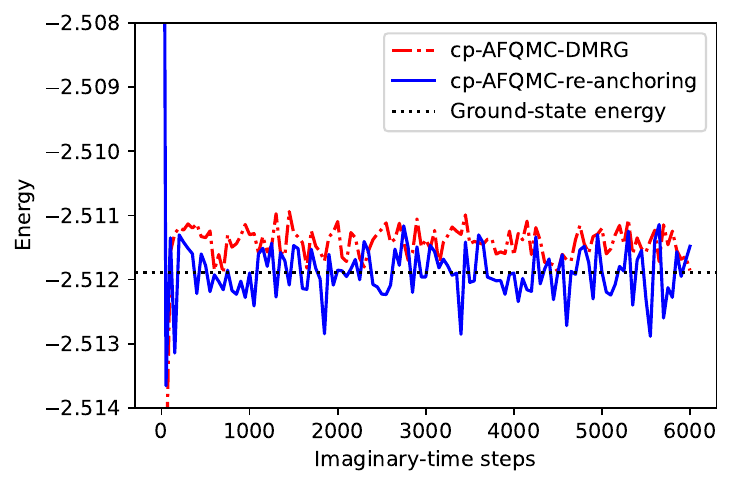}
    \label{fig:4*4 energy}
\end{minipage}
}
\vspace{-0.4cm}
\caption{Comparison between cp-AFQMC-re-anchoring and cp-AFQMC-DMRG for the $4\times4$ system with periodic boundary at $g=2.0$. (a): comparison of the overlap between the ground-state wavefunction and the wavefunctions obtained by both methods, in the form of a rank-$4$ TT; (b): comparison of the energy convergence. }
\end{figure}

Apart from ground-state energy, 
we can also assess the advantages of our algorithm
in the estimation of other observables. To estimate the expectation of an arbitrary observable $O$ in the ground state
\begin{equation*}
    \braket{O}_\mathrm{gs} = \frac{\braket{\Psi_0,O\Psi_0}}{\braket{\Psi_0,\Psi_0}}, 
\end{equation*}
we apply the extrapolated estimator \cite{whitlock1979properties}
\begin{equation*}
    \braket{O}_\mathrm{extrap} = 2\braket{O}_\mathrm{mix} - \braket{O}_\mathrm{tr}, 
\end{equation*}
where $\braket{O}_\mathrm{mix}$
denotes the mixed estimator
\begin{equation*}
    \braket{O}_\mathrm{mix} = \frac{\braket{\Psi_\mathrm{tr},O \widehat{\Psi}^{(n)}}}{\braket{\Psi_\mathrm{tr},\widehat{\Psi}^{(n)}}}, 
\end{equation*}
which is estimated from the cp-AFMQC procedure, and
\begin{equation*}
    \braket{O}_\mathrm{tr} = \frac{\braket{\Psi_\mathrm{tr},O\Psi_\mathrm{tr}}}{\braket{\Psi_\mathrm{tr},\Psi_\mathrm{tr}}}. 
\end{equation*}
Because the quality of our trial wavefunction is very high, the extrapolated estimator is sufficient to remove any residual systematic error. 
As an example, we study the average spin in $z$-direction:
\begin{equation}\label{eq:ave spin}
    \mathcal{S}_z = \frac{1}{d}\sum_{i=1}^d \sigma_i^z. 
\end{equation}
Due to the symmetry of the Hamiltonian \eqref{eq:TFI hamiltonian} under spin-flip transformations in $\sigma_z$, specifically the invariance of the Hamiltonian when all $\sigma_i^z$ are flipped to $-\sigma_i^z$, the expectation value of $\mathcal{S}_z$ in the ground state is $\braket{\mathcal{S}_z}_\mathrm{gs}=0$. Numerical results in Figure~\ref{fig:average spin} 
demonstrate that cp-AFQMC-re-anchoring preserves the $\sigma_z$ symmetry,  
which is broken in cp-AFQMC-DMRG. 

\begin{figure}[!htbp]
\centering
\includegraphics[width=0.5\linewidth]{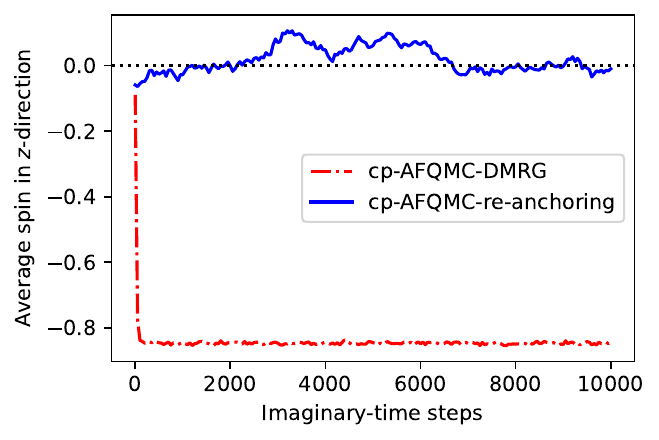}
\vspace{-0.4cm}
\caption{Average spin in $z$-direction (defined in \eqref{eq:ave spin}) of the $4\times4$ system with periodic boundary at $g=2.0$. }
\label{fig:average spin}
\end{figure}

Another example is the spin-correlation. For two spins $i$ and $j$, their correlation is calculated as
\begin{equation}\label{eq:spin correlation}
    \mathrm{Cor}(i,j) = 
    \frac{\mathrm{Cov}(\sigma_i^z,\sigma_j^z)}{\sqrt{\mathrm{Var}(\sigma_i^z) \mathrm{Var}(\sigma_j^z)}} = 
    \frac{\braket{\sigma_i^z \sigma_j^z} - \braket{\sigma_i^z} \braket{\sigma_j^z}}{\sqrt{(1-\braket{\sigma_i^z}^2) (1-\braket{\sigma_j^z}^2})}, 
\end{equation}
where we have incorporated the fact that $(\sigma_i^z)^2=(\sigma_j^z)^2=I$.

Under periodic boundary conditions, 
the correlation between two spins depends solely on their Taxicab distance \cite{sowell1989taxicab}, which is simply the number of steps needed to reach one spin from another, as illustrated in Figure~\ref{fig:spin-label}. 
We select spin $1$ as the reference (highlighted in red) and monitor its correlation with the spins labeled in green along the arrows. The comparison of our algorithm and cp-AFQMC-DMRG is presented in Figure~\ref{fig:spin-correlation by distance}. 
It is seen that
cp-AFQMC-re-anchoring recovers the correct spin-correlation perfectly, whereas cp-AFQMC-DMRG does not.


\begin{figure}[!htbp]
\centering
\subfigure[Labels of the spins]{
\begin{minipage}{0.49\textwidth}
    \centering
    \vspace{-0.25cm}
    \includegraphics[width=0.6\linewidth]{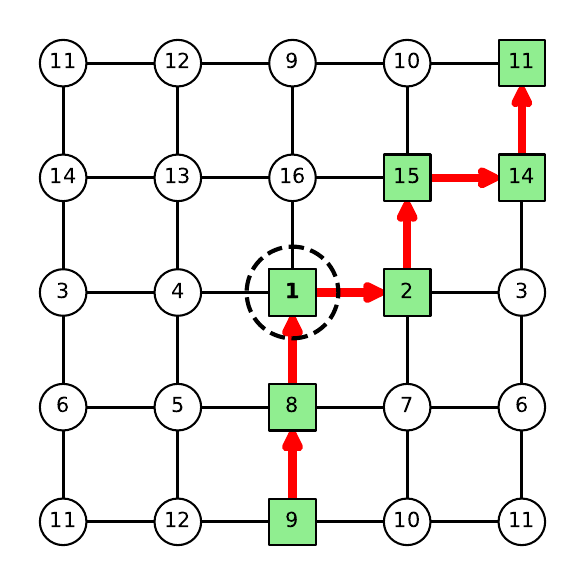}
    \label{fig:spin-label}
\end{minipage}
}
\hspace{-2.5cm}
\subfigure[Spin-correlation]{
\begin{minipage}{0.49\textwidth}
    \centering
    \includegraphics[width=0.8\linewidth]{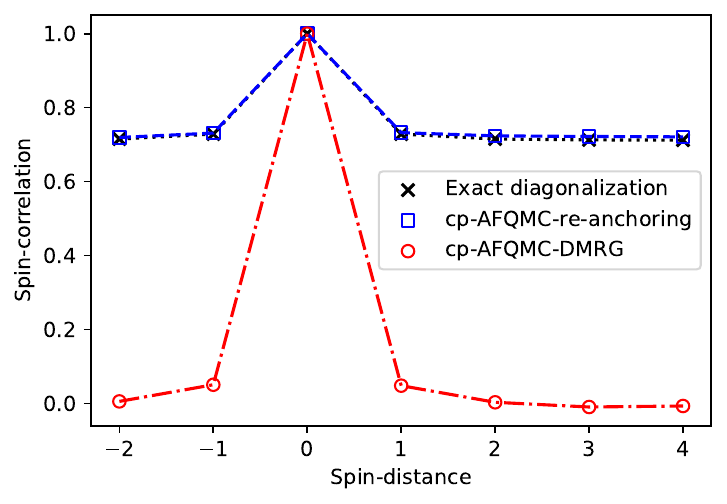}
    \label{fig:spin-correlation by distance}
\end{minipage}
}
\vspace{-0.4cm}
\caption{Spin-correlation (defined in \eqref{eq:spin correlation}) of the $4\times4$ system with periodic boundary at $g=2.0$. (a): labeling of the spins; (b): correlations with the reference spin $1$. 
}
\label{fig:spin-correlation 4*4}
\end{figure}

As a further illustration, we test both algorithms on a larger system with 64 spins arranged on an $8\times8$ lattice with periodic boundary, and estimate the ground-state energy of this system. The transverse field is set to $g=3.0$. For such a large system, exact diagonalization is no longer feasible, so the reference for the ground-state energy is obtained using DMRG with increasing ranks. As shown in Figure~\ref{fig:8*8 reanchoring energy}, the ground-state energy estimated by cp-AFQMC-re-anchoring matches the DMRG results obtained with a very high rank TT. The energy convergence plots for both cp-AFQMC-re-anchoring and cp-AFQMC-DMRG are presented in Figure~\ref{fig:8*8 qmc energy}. In both cases, the rank of the trial wavefunctions is set to $4$. 
As can be seen, the energy estimated by cp-AFQMC-DMRG exhibits a larger error compared to that of our algorithm. 

\begin{figure}[!htbp]
\centering
\subfigure[]{
\begin{minipage}{0.49\textwidth}
    \centering
    \includegraphics[width=0.8\linewidth]{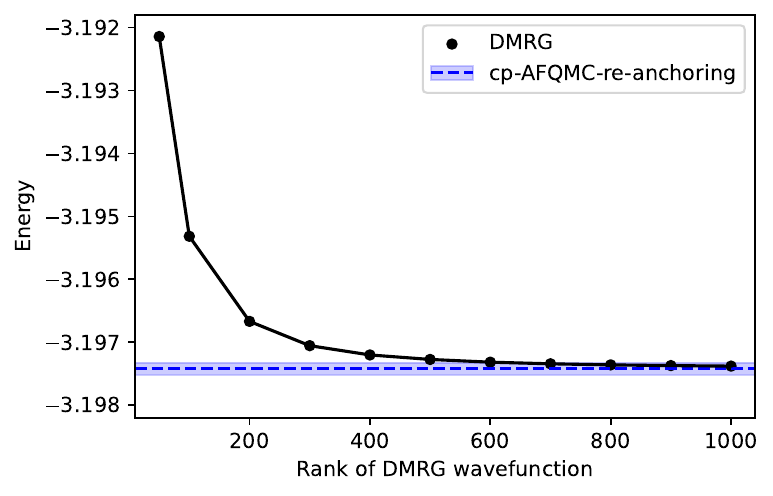}
    \label{fig:8*8 reanchoring energy}
\end{minipage}
}
\hspace{-1.8cm}
\subfigure[]{
\begin{minipage}{0.49\textwidth}
    \centering
    \includegraphics[width=0.8\linewidth]{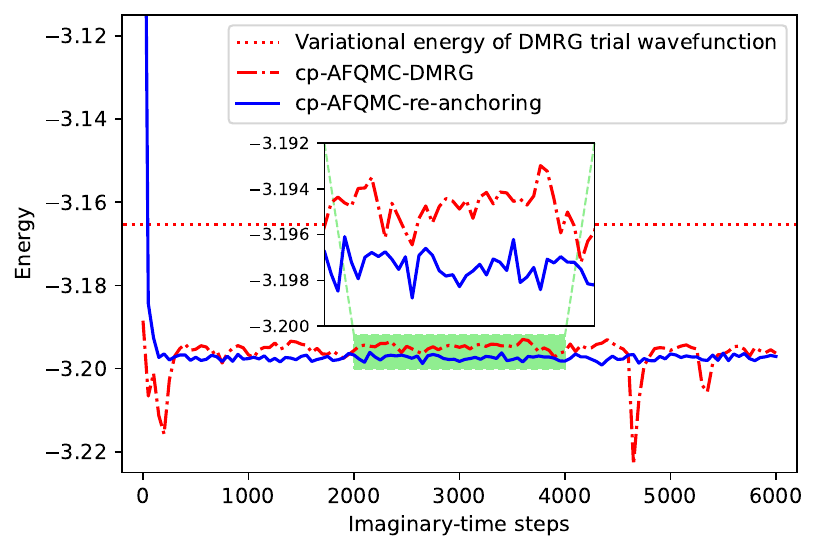}
    \label{fig:8*8 qmc energy}
\end{minipage}
}
\vspace{-0.4cm}
\caption{Results for the $8\times8$ system with periodic boundary at $g=3.0$. The rank of the trial wavefunctions in cp-AFQMC is set to $4$. (a): comparison between the ground-state energy estimated by DMRG with increasing ranks and cp-AFQMC-re-anchoring; (b): comparison between the energy convergence of cp-AFQMC-re-anchoring and cp-AFQMC-DMRG. }
\end{figure}

\section{Conclusion}\label{sec:conclusion}
In this paper, we proposed a framework for studying the ground-state properties of quantum many-body systems. Our algorithm improves cp-AFQMC by periodically updating the trial wavefunction using TT-sketching. 
Systematic analysis of the algorithm 
and detailed numerical tests 
on the transverse-field Ising model are presented.
The algorithm combines key features of AFQMC with TT/MPS, providing a route for a more powerful approach that extends the capabilities of each. 
This work opens a number of future directions, including extension and application of
this framework to electronic systems. 

\section*{Acknowledgments}
YK is partially funded by DMS-2339439, DE-SC0022232, and Sloan Research Fellowship. The Flatiron
Institute is a division of the Simons Foundation.

The authors would like to thank Miles Stoudenmire for helpful discussions on the use of the ITensor library in our numerical simulations.

\appendix

\section{Time Complexity of TT Calculation}

In this section, we investigate numerically the time complexity of tensor-train calculations. Specifically, we verify that computing the inner product between a TT trial wavefunction and the walkers scales linearly with both the number of spins and the number of walkers, as shown in Figure~\ref{fig:tt overlap scaling}. Similarly, the computational cost of TT-sketching has a linear scaling in both parameters, as illustrated in Figure~\ref{fig:tt-sketching scaling}.

\begin{figure}[!htbp]
\centering
\subfigure[Scaling with number of spins]{
\begin{minipage}{0.49\textwidth}
    \centering
    \includegraphics[width=0.8\linewidth]{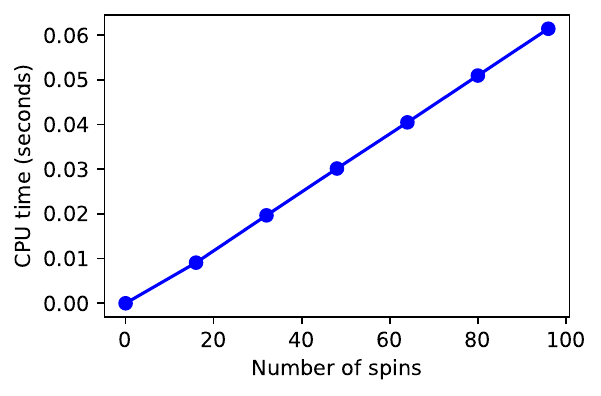}
    \vspace{0.1cm}
\end{minipage}
}
\hspace{-1.5cm}
\subfigure[Scaling with number of walkers]{
\begin{minipage}{0.49\textwidth}
    \centering
    \includegraphics[width=0.8\linewidth]{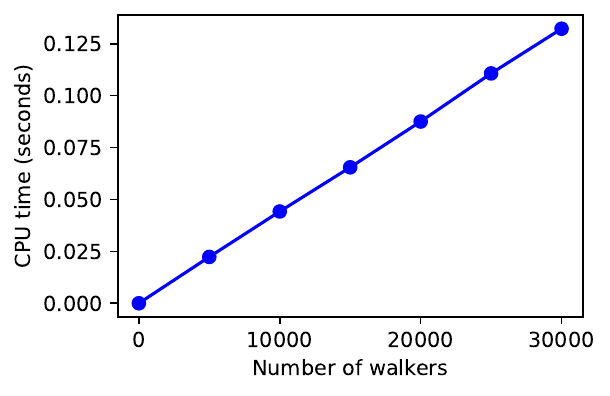}
    \vspace{0.1cm}
\end{minipage}
}
\vspace{-0.4cm}
\caption{Computational cost of calculating the inner product of a TT trial wavefunction and the walkers. (a): different number of spins with $2000$ walkers; (b): different number of walkers for $16$ spins. 
}
\label{fig:tt overlap scaling}
\end{figure}

\begin{figure}[!htbp]
\centering
\subfigure[Scaling with number of spins]{
\begin{minipage}{0.49\textwidth}
    \centering
    \includegraphics[width=0.8\linewidth]{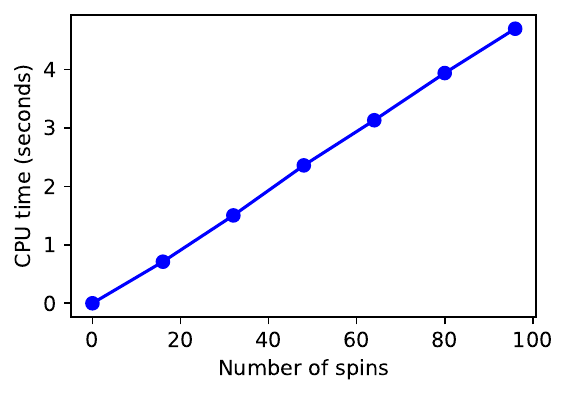}
    \vspace{0.1cm}
\end{minipage}
}
\hspace{-1.5cm}
\subfigure[Scaling with number of walkers]{
\begin{minipage}{0.49\textwidth}
    \centering
    \includegraphics[width=0.8\linewidth]{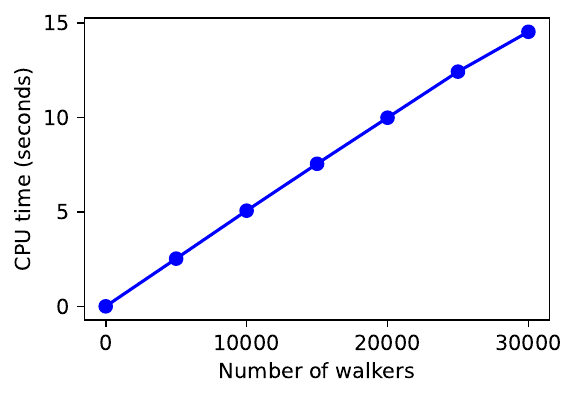}
    \vspace{0.1cm}
\end{minipage}
}
\vspace{-0.4cm}
\caption{Computational cost of TT-sketching. (a): different number of spins with $2000$ walkers; (b): different number of walkers for $16$ spins. }
\label{fig:tt-sketching scaling}
\end{figure}

\section{Details on Convergence Analysis of cp-AFQMC}\label{sec:thm proof}

In this section, we provide proofs for Lemma~\ref{lemma:energy error decreasing new}, Theorem~\ref{thm:energy convergence} and Theorem~\ref{thm: walker convergence}. 

\begin{proof}[Proof of Lemma~\ref{lemma:energy error decreasing new}]
The energy can be written explicitly as
\begin{equation*}
    \frac{\braket{\Psi_\mathrm{tr},H\Phi_k^{(n)}}}{\braket{\Psi_\mathrm{tr},\Phi_k^{(n)}}} = \frac{\sum_{j=0}^{2^d-1}c_jc_j^\prime E_j}{\sum_{j=0}^{2^d-1}c_jc_j^\prime}, 
\end{equation*}
and thus
\begin{equation}\label{eq:energy error old}
    \frac{\braket{\Psi_\mathrm{tr},H\Phi_k^{(n)}}}{\braket{\Psi_\mathrm{tr},\Phi_k^{(n)}}} - E_0 = \frac{\sum_{j=1}^{2^d-1}c_jc_j^\prime (E_j-E_0)}{\sum_{j=0}^{2^d-1}c_jc_j^\prime}. 
\end{equation}
Similarly, we have
\begin{equation}\label{eq:energy error new}
    \frac{\braket{\Psi_\mathrm{tr},He^{-\Delta\tau H}\Phi_k^{(n)}}}{\braket{\Psi_\mathrm{tr},e^{-\Delta\tau H}\Phi_k^{(n)}}} - E_0 = \frac{\sum_{j=1}^{2^d-1}c_jc_j^\prime e^{-\Delta\tau E_j} (E_j-E_0)}{\sum_{j=0}^{2^d-1}c_jc_j^\prime e^{-\Delta\tau E_j}}, 
\end{equation}
since \eqref{eq:phi_k^n expansion} directly gives the expansion
\begin{equation*}
    e^{-\Delta\tau H} \Phi_k^{(n)} =  c_0^\prime e^{-\Delta\tau E_0}\Psi_0 +  c_1^\prime e^{-\Delta\tau E_1}\Psi_1 + \cdots +  c_{2^d-1}^\prime e^{-\Delta\tau E_{2^d-1}}\Psi_{2^d-1}. 
\end{equation*}

Now, we compare \eqref{eq:energy error old} and \eqref{eq:energy error new}:
\begin{equation}\label{eq:energy error compare}
    \frac{\abs{\frac{\braket{\Psi_\mathrm{tr},He^{-\Delta\tau H}\Phi_k^{(n)}}}{\braket{\Psi_\mathrm{tr},e^{-\Delta\tau H}\Phi_k^{(n)}}} - E_0}}{\abs{\frac{\braket{\Psi_\mathrm{tr},H\Phi_k^{(n)}}}{\braket{\Psi_\mathrm{tr},\Phi_k^{(n)}}} - E_0}}
    = \frac{\abs{\sum_{j=1}^{2^d-1}c_jc_j^\prime e^{-\Delta\tau E_j} (E_j-E_0)}}{\abs{\sum_{j=1}^{2^d-1}c_jc_j^\prime (E_j-E_0)}}
    \cdot
    \frac{\abs{\sum_{j=0}^{2^d-1}c_jc_j^\prime}}{\abs{\sum_{j=0}^{2^d-1}c_jc_j^\prime e^{-\Delta\tau E_j}}}. 
\end{equation}
For the first term of the RHS of \eqref{eq:energy error compare}, the assumption \eqref{eq:assmption c_1 dominates} gives
\begin{equation}\label{eq:first term of energy compare}
\begin{split}
    \frac{\abs{\sum_{j=1}^{2^d-1}c_jc_j^\prime e^{-\Delta\tau E_j} (E_j-E_0)}}{\abs{\sum_{j=1}^{2^d-1}c_jc_j^\prime (E_j-E_0)}} \le &
    \frac{\abs{c_1c_1^\prime e^{-\Delta\tau E_1}(E_1-E_0)} + \abs{\sum_{j=2}^{2^d-1}c_jc_j^\prime e^{-\Delta\tau E_j} (E_j-E_0)}}{\abs{c_1c_1^\prime} (E_1-E_0) - \abs{\sum_{j=2}^{2^d-1}c_jc_j^\prime (E_j-E_0)}} \\
    \le & \frac{e^{-\Delta\tau E_1} \abs{c_1c_1^\prime} (E_1-E_0) + a_1 e^{-\Delta\tau E_1 }\vert c_1\vert (E_1-E_0)\sqrt{\sum_{j=2}^{2^d-1}{c_j^\prime}^2}}{\abs{c_1c_1^\prime} (E_1-E_0) - \sum_{j=2}^{2^d-1}\abs{c_jc_j^\prime} (E_j-E_0)} \\
    \le & \frac{e^{-\Delta\tau E_1} \abs{c_1c_1^\prime} (E_1-E_0) + a_1 e^{-\Delta\tau E_1 }\vert c_1\vert (E_1-E_0)\sqrt{\sum_{j=2}^{2^d-1}{c_j^\prime}^2}}{\abs{c_1c_1^\prime} (E_1-E_0) - a_1 \vert c_1\vert (E_1-E_0)\sqrt{\sum_{j=2}^{2^d-1}{c_j^\prime}^2}} \\
    \le & e^{-\Delta\tau E_1}\frac{1+a_1\frac{\sqrt{\sum_{j=2}^{2^d-1}{c_j^\prime}^2}}{\vert c'_1\vert}}{1-a_1\frac{\sqrt{\sum_{j=2}^{2^d-1}{c_j^\prime}^2}}{\vert c'_1\vert}}. 
\end{split}
\end{equation}
For the second term of the RHS of \eqref{eq:energy error compare}, we have
\begin{equation}\label{eq:second term of energy compare}
\begin{split}
    \frac{\abs{\sum_{j=0}^{2^d-1}c_jc_j^\prime}}{\abs{\sum_{j=0}^{2^d-1}c_jc_j^\prime e^{-\Delta\tau E_j}}} 
    \le & \frac{\abs{c_0c_0^\prime} + \sum_{j=1}^{2^d-1}\abs{c_jc_j^\prime}}{e^{-\Delta\tau E_0} \abs{c_0c_0^\prime } - \sum_{j=1}^{2^d-1}\abs{c_jc_j^\prime} e^{-\Delta\tau E_j}} \\
    \le & \frac{\abs{c_0c_0^\prime} + \sum_{j=1}^{2^d-1}\abs{c_jc_j^\prime}}{e^{-\Delta\tau E_0} \abs{c_0c_0^\prime } - a_0 e^{-\Delta\tau E_0} \abs{c_0}\sqrt{\sum_{j=2}^{2^d-1}{c_j^\prime}^2}} \\
    \le & e^{\Delta\tau E_0}\frac{1+a_0\frac{\sqrt{\sum_{j=1}^{2^d-1}{c_j^\prime}^2}}{\vert c'_0\vert}}{1-a_0\frac{\sqrt{\sum_{j=1}^{2^d-1}{c_j^\prime}^2}}{\vert c'_0\vert}}.
\end{split}
\end{equation}
Plugging \eqref{eq:first term of energy compare} and \eqref{eq:second term of energy compare} back into \eqref{eq:energy error compare} then gives \eqref{eq:lemma energy error decrease in exact power iteration}.


\end{proof}

\begin{proof}[Proof of Theorem~\ref{thm:energy convergence}]

We split the energy into a deterministic part and a fluctuation part:
\begin{equation}\label{eq:theorem energy convergence split}
    \abs{\frac{\braket{\Psi_\mathrm{tr},H\widetilde{B}_k^{(n)}(\vec{x})\Phi_k^{(n)}}}{\braket{\Psi_\mathrm{tr},\widetilde{B}_k^{(n)}(\vec{x})\Phi_k^{(n)}}} -E_0}\leq 
    \abs{\frac{\braket{\Psi_\mathrm{tr},He^{-\Delta\tau H}\Phi_k^{(n)}}}{\braket{\Psi_\mathrm{tr},e^{-\Delta\tau H}\Phi_k^{(n)}}} - E_0}+ \abs{\frac{\braket{\Psi_\mathrm{tr},H(\widetilde{B}_k^{(n)}(\vec{x})-e^{-\Delta\tau H})\Phi_k^{(n)}}}{\braket{\Psi_\mathrm{tr},e^{-\Delta\tau H}\Phi_k^{(n)}}}}, 
\end{equation}
where we have used \eqref{eq:lemma: no sign problem zero variance} for the denominator. 
The bound on the first term directly follows from Lemma~\ref{lemma:energy error decreasing new}. For the second term,  plugging in the decomposition \eqref{eq:trial decompose} gives
\begin{eqnarray*}\label{eq:theorem energy convergence one step fluctuation part}
     & & \frac{\braket{\Psi_\mathrm{tr},H(\widetilde{B}_k^{(n)}(\vec{x})-e^{-\Delta\tau H})\Phi_k^{(n)}}}{\braket{\Psi_\mathrm{tr},e^{-\Delta\tau H}\Phi_k^{(n)}}} \cr
    &=& \frac{\braket{H\Psi_\mathrm{tr},(\widetilde{B}_k^{(n)}(\vec{x})-e^{-\Delta\tau H})\Phi_k^{(n)}}}{\braket{\Psi_\mathrm{tr},e^{-\Delta\tau H}\Phi_k^{(n)}}} \cr
    &=& \frac{\braket{ c_0E_0\Psi_0+c_\perp H\Psi_\perp,(\widetilde{B}_k^{(n)}(\vec{x})-e^{-\Delta\tau H})\Phi_k^{(n)}}}{\braket{\Psi_\mathrm{tr},e^{-\Delta\tau H}\Phi_k^{(n)}}}\cr 
    &=& \frac{\braket{c_\perp(H-E_0I)\Psi_\perp,(\widetilde{B}_k^{(n)}(\vec{x})-e^{-\Delta\tau H})\Phi_k^{(n)}}}{\braket{\Psi_\mathrm{tr},e^{-\Delta\tau H}\Phi_k^{(n)}}} \cr
    &=& \frac{\braket{c_\perp(H-E_0I)\Psi_\perp,(\widetilde{B}_k^{(n)}(\vec{x})-e^{-\Delta\tau H})\Phi_k^{(n)}}}{\braket{\Psi_\mathrm{tr},e^{-\Delta\tau H}\Phi_k^{(n)}}} \cr
    &\le& c_\perp\|H-E_0 I \|_2\|\widetilde{B}_k^{(n)}(\vec{x})-e^{-\Delta\tau H} \|_2\frac{\| \mathcal{P}_\perp \Phi_k^{(n)} \|}{\vert\braket{\Psi_\mathrm{tr},e^{-\Delta\tau H}\Phi_k^{(n)}}\vert}\cr
    &=&  c_\perp\|H-E_0 I \|_2\|\widetilde{B}_k^{(n)}(\vec{x})-e^{-\Delta\tau H} \|_2\frac{\| \mathcal{P}_\perp \Phi_k^{(n)} \|}{\vert c_0\braket{\Psi_0,e^{-\Delta\tau H}\Phi_k^{(n)}}+c_\perp\braket{\Psi_\perp,e^{-\Delta\tau H}\Phi_k^{(n)}}\vert}\cr
    &\leq &  \|H-E_0 I \|_2\frac{\|\widetilde{B}_k^{(n)}(\vec{x})-e^{-\Delta\tau H} \|_2}{e^{-\Delta\tau E_0}}\left\vert \frac{c_\perp}{c_0}\frac{\| \mathcal{P}_\perp \Phi_k^{(n)} \|}{\braket{\Psi_0,\Phi_k^{(n)}}}\right\vert\bigg/\left(1+a_0 e^{-\Delta(E_1-E_0)}\frac{\| \mathcal{P}_\perp \Phi_k^{(n)} \|}{\vert \braket{\Psi_0,\Phi_k^{(n)}}\vert}\right)\cr
    &=& \frac{\|\widetilde{B}_k^{(n)}(\vec{x})-e^{-\Delta\tau H} \|_2}{e^{-\Delta\tau E_0}}  \frac{a_0\tan\theta_k^{(n)}\|H-E_0 I \|_2}{1+a_0 e^{-\Delta(E_1-E_0)}\tan\theta_k^{(n)}},
\end{eqnarray*}
where the second equality follows from \eqref{eq:assumption c_0 dominates} and Cauchy–Schwarz inequality, and we use $\mathcal{P}_\perp$ to denote the orthogonal projector onto the space spanned by $\{\Psi_1,\ldots \Psi_{2^d-1}\}$. Plugging this and the bound in Lemma~\ref{lemma:energy error decreasing new} into \eqref{eq:theorem energy convergence split} then gives \eqref{eq:theorem energy convergence}. 
\end{proof}

\begin{proof}[Proof of Theorem~\ref{thm: walker convergence}]
To estimate $\tan\theta_k^{(n+1)}(\vec{x})$, we need to calculate each component of $\widetilde{B}_k^{(n)}(\vec{x})\Phi_k^{(n)}$ under the basis $\{\Psi_j\}_j$. We first study the overlap of $\widetilde{B}_k^{(n)}(\vec{x})\Phi_k^{(n)}$ with the ground state $\Psi_0$
\begin{multline}\label{eq:psi0 component}
    \braket{\Psi_0,\widetilde{B}_k^{(n)}(\vec{x})\Phi_k^{(n)}} = \\ \left\langle\frac{\Psi_\mathrm{tr}-c_\perp \Psi_\perp}{c_0},\widetilde{B}_k^{(n)}(\vec{x})\Phi_k^{(n)}\right\rangle
    = \frac{1}{c_0}\langle\Psi_\mathrm{tr},\widetilde{B}_k^{(n)}(\vec{x})\Phi_k^{(n)}\rangle - \frac{c_\perp}{c_0} \braket{\Psi_\perp,\widetilde{B}_k^{(n)}(\vec{x})\Phi_k^{(n)}}. 
\end{multline}

For the first term, we have
\begin{equation*}
\begin{split}
    &\quad \braket{\Psi_\mathrm{tr},\widetilde{B}_k^{(n)}(\vec{x})\Phi_k^{(n)}}\\
    &=\braket{\Psi_\mathrm{tr},e^{-\Delta\tau H}\Phi_k^{(n)}} \\
    &= \braket{c_0 \Psi_0 + c_\perp \Psi_\perp,e^{-\Delta\tau H}\Phi_k^{(n)}} \\
    &= c_0 \braket{e^{-\Delta\tau E_0}\Psi_0,\Phi_k^{(n)}} + c_\perp\braket{\Psi_\perp,e^{-\Delta\tau H}\Phi_k^{(n)}}, \\
\end{split}
\end{equation*}
where in the first equality we use Lemma~\ref{lemma: no sign problem zero variance}, in the second equality we use \eqref{eq:trial decompose}, and in the third equality we use the fact that $e^{-\Delta\tau H}$ is a Hermitian matrix. Plugging this back into \eqref{eq:psi0 component} we get
\begin{equation*}
\begin{split}
\braket{\Psi_0,\widetilde{B}_k^{(n)}(\vec{x})\Phi_k^{(n)}} = e^{-\Delta\tau E_0}\braket{\Psi_0,\Phi_k^{(n)}} - \frac{c_\perp}{c_0} \braket{\Psi_\perp, E^{(n)}_k}. 
\end{split}
\end{equation*}

Now, the component of $\widetilde{B}_k^{(n)}(\vec{x})\Phi_k^{(n)}$ in  the excited subspace can be characterized as 
\begin{equation*}
\begin{split}
    \|\mathcal{P}_\perp \widetilde{B}_k^{(n)}\Phi_k^{(n)} \|&\leq \|\mathcal{P}_\perp e^{-\Delta\tau H}\Phi_k^{(n)}\| + \|\mathcal{P}_\perp \widetilde{B}_k^{(n)}(\vec{x}) - e^{-\Delta\tau H}\|\leq e^{-\Delta \tau E_1}\|\mathcal{P}_\perp \Phi_k^{(n)}\| +  \|\widetilde{B}_k^{(n)}(\vec{x}) - e^{-\Delta\tau H}\|, \\
\end{split} 
\end{equation*}
where $\mathcal{P}_\perp$ projects to 
\begin{equation*}
\begin{split}
    \tan\theta_k^{(n+1)}(\vec{x}) &=  \frac{\|\mathcal{P}_\perp \widetilde{B}_k^{(n)}\Phi_k^{(n)} \|}{\vert\braket{\Psi_0,\widetilde{B}_k^{(n)}(\vec{x})\Phi_k^{(n)}}\vert}\\
    &\leq  \frac{e^{-\Delta \tau E_1}\|\mathcal{P}_\perp \Phi_k^{(n)}\| +   \|\widetilde{B}_k^{(n)}(\vec{x}) - e^{-\Delta\tau H}\|}{ e^{-\Delta\tau E_0}\vert\braket{\Psi_0,\Phi_k^{(n)}}\vert - \abs{\frac{c_\perp}{c_0}} \vert\braket{\Psi_\perp, \widetilde{B}_k^{(n)}(\vec{x}) - e^{-\Delta\tau H}}\vert}\\
    &\leq e^{-\Delta \tau (E_1-E_0)}\frac{\|\mathcal{P}_\perp \Phi_k^{(n)}\|}{\vert\braket{\Psi_0,\Phi_k^{(n)}}\vert}\left(1+ e^{\Delta\tau E_0}\abs{\frac{c_\perp}{c_0}} O\left(\frac{ \|\widetilde{B}_k^{(n)}(\vec{x}) - e^{-\Delta\tau H}\|}{\vert\braket{\Psi_0,\Phi_k^{(n)}}\vert}\right)\right) \\
    &\quad + \frac{ \|\widetilde{B}_k^{(n)}(\vec{x}) - e^{-\Delta\tau H}\|}{e^{-\Delta \tau E_0}\braket{\Psi_0,\Phi_k^{(n)}}}\left(1+ e^{\Delta\tau E_0}\abs{\frac{c_\perp}{c_0}} O\left(\frac{ \|\widetilde{B}_k^{(n)}(\vec{x}) - e^{-\Delta\tau H}\|}{\vert\braket{\Psi_0,\Phi_k^{(n)}}\vert}\right)\right)\\
    &\leq  e^{-\Delta \tau (E_1-E_0)}\tan \theta_k^{(n)} + e^{\Delta \tau E_0}\left(\tan \theta_k^{(n)} \abs{\frac{c_\perp}{c_0}} +1\right)O\left( \frac{ \|\widetilde{B}_k^{(n)}(\vec{x}) - e^{-\Delta\tau H}\|}{\vert\braket{\Psi_0,\Phi_k^{(n)}}\vert}  \right)\\
    &\quad + e^{2\Delta \tau E_0}\abs{\frac{c_\perp}{c_0}} O\left( \frac{ \|\widetilde{B}_k^{(n)}(\vec{x}) - e^{-\Delta\tau H}\|}{\vert\braket{\Psi_0,\Phi_k^{(n)}}\vert}  \right)^2. 
\end{split}
\end{equation*}

\end{proof}

\section{Implementation of cp-AFQMC on Transverse-Field Ising Model}\label{sec:cp-afqmc detail on tfi}
In this section, we introduce the detailed implementation of Algorithm~\ref{alg:cp-AFQMC}, and how we apply it on the transverse-field Ising model. 

\subsection{Implementation details of cp-AFQMC}
To implement Algorithm~\ref{alg:cp-AFQMC}, we rewrite the wavefunction representation \eqref{eq:wavefunction representation} as
\begin{equation}\label{eq:cp walker}
    \widehat{\Psi}^{(n)} = \sum_k w_k^{(n)} \frac{\phi_k^{(n)}}{\braket{\Psi_\mathrm{tr}, \phi_k^{(n)}}}, 
\end{equation}
where $w_k^{(n)}$ denotes the weight of the $k$-th walker. In actual implementations, a walker is fully specified by
$\phi_k^{(n)}$, its associated weight $w_k^{(n)}$, and the overlap
$\langle \Psi_{\mathrm{tr}}, \phi_k^{(n)} \rangle$, which are stored and updated during the simulation. In particular, whenever $\phi_k^{(n)}$ is updated, the overlap $\langle \Psi_{\mathrm{tr}}, \phi_k^{(n)} \rangle$ is updated accordingly. 
Since the random walkers are guaranteed to always have a positive overlap with the trial wavefunction in cp-AFQMC, the weight of a valid walker is always positive $w_k^{(n)} > 0$. 
The propagation of the walkers \eqref{eq:ensemble propagation math} can then be rewritten as
\begin{eqnarray*}
e^{-\Delta\tau H}\widehat \Psi^{(n)} &\approx& \sum_k \sum_{\vec{x}:Q^{(n)}_k(\vec{x})>0}Q^{(n)}_k(\vec{x}) w^{(n)}_k\frac{P(\vec{x})}{Q^{(n)}_k(\vec{x})} \frac{B(\vec{x}){\phi}_k^{(n)}}{\braket{\Psi_\mathrm{tr}, \phi_k^{(n)}}}\cr 
&\approx& \sum_k  w^{(n)}_k \frac{P(\vec{x}^{(n)}_k)}{Q^{(n)}_k(\vec{x}^{(n)}_k)} \frac{\phi_k^{(n+1)} }{\braket{\Psi_\mathrm{tr}, \phi_k^{(n)}}}\cr 
&=& \sum_k  \left(w^{(n)}_k \frac{P(\vec{x}^{(n)}_k)}{Q^{(n)}_k(\vec{x}^{(n)}_k)} \frac{\braket{\Psi_\mathrm{tr}, \phi_k^{(n+1)}}}{\braket{\Psi_\mathrm{tr}, \phi_k^{(n)}}}\right)\frac{\phi_k^{(n+1)} }{\braket{\Psi_\mathrm{tr}, \phi_k^{(n+1)}}}\cr 
&=& \sum_k w_k^{(n+1)} \frac{\phi_k^{(n+1)} }{\braket{\Psi_\mathrm{tr}, \phi_k^{(n+1)}}},
\end{eqnarray*}
where
\begin{equation*}
 \phi_k^{(n+1)} = B(\vec{x}^{(n)}_k){\phi}_k^{(n)}, \vec{x}^{(n)}_k\sim Q^{(n)}_k,
\end{equation*}
and
\begin{equation}\label{eq:weight update factor}
w^{(n+1)}_k = w^{(n)}_k\mathcal{N}_k^{(n)}.
\end{equation}
Here, we remark that the new weight $w_k^{(n+1)}$ as defined in \eqref{eq:weight update factor} does not depend on the auxiliary field $\vec{x}$ being sampled, since the normalization factor $\mathcal{N}_k^{(n)}$ is independent of $\vec{x}$. 

With the wave function representation \eqref{eq:cp walker}, we can also estimate the ground-state energy in a straightforward way. Specifically, we use the mixed estimator
\begin{equation}\label{eq:mixed energy estimator}
    E_\text{mixed} = \frac{\braket{\Psi_\mathrm{tr},H\widehat \Psi^{(n)}}}{\braket{\Psi_\mathrm{tr},\widehat \Psi^{(n)}}} = \frac{\sum_k w^{(n)}_k E_\text{local}[\Psi_\mathrm{tr},\phi_k^{(n)}]}{\sum_k w^{(n)}_k}, 
\end{equation}
where $E_\text{local}$ represents the local energy of a walker:
\begin{equation*}
    E_\text{local}[\Psi_\mathrm{tr},\phi] = \frac{\braket{\Psi_\mathrm{tr},H\phi}}{\braket{\Psi_\mathrm{tr},\phi}}. 
\end{equation*}
As the random walk goes on, the weights of the walkers can become highly uneven: some grow very large while others shrink to negligible values. Consequently, only a small fraction of walkers effectively contribute to the energy estimation, leading to increased variance and reduced sampling efficiency. To solve this issue, we periodically apply a population control procedure, in which walkers with very small weights are removed, while those with large weights are branched into multiple copies. The detailed implementation is summarized in Algorithm~\ref{alg:ppl control}. 

\begin{algorithm}[!htbp]
\caption{Population control}\label{alg:ppl control}
\begin{algorithmic}
\Require walkers $\phi_k$, weights $w_k$, and corresponding $\braket{\Psi_\mathrm{tr}, \phi_k}$. 
\Ensure adjusted walkers $\phi_k$, weights $w_k$, and corresponding $\braket{\Psi_\mathrm{tr}, \phi_k}$. 

\State normalize the weights so that the average value of the weights is $1$. 

\State $s\leftarrow \text{a uniformly random number in }(0,1)$,
\State $n_1\leftarrow1$, 

\For{$k=1,2,\cdots,N$}
\State $s\leftarrow s + w_k$; 
\State $n_2\leftarrow \text{the smallest integer that is greater than or equal to } s$; 
\State set the walkers $\phi_{n_1},\cdots,\phi_{n_2-1}$ to be $\phi_k$, and corresponding overlaps to be $\braket{\Psi_\mathrm{tr}, \phi_k}$; 
\State $n_1\leftarrow n_2$
\EndFor
\State reset all the weights to be $w_k=1$. 

\end{algorithmic}
\end{algorithm}

\subsection{cp-AFQMC on TFI model}\label{sec:cpmc on tfi}

To apply the cp-AFQMC procedure on TFI model, we follow the procedure of the CPMC algorithm on Hubbard model \cite{nguyen2014cpmc}. 
For the Hamiltonian \eqref{eq:TFI hamiltonian}, we use the Suzuki-Trotter approximation \cite{suzuki1976relationship,trotter1959product} to approximate $e^{-\Delta\tau H}$:
\begin{equation}\label{eq:trotter approximation}
    e^{-\Delta\tau H} = e^{-\Delta\tau H_1/2} e^{-\Delta\tau H_2} e^{-\Delta\tau H_1/2} + O(\Delta\tau^3 ):= B_{H_1/2}B_{H_2}B_{H_1/2}  + O(\Delta\tau^3 ), 
\end{equation}
where we refer to $B_{H_1/2}$ and $B_{H_2}$ as the one-body and two-body propagator, respectively. For the one-body propagator, it holds
\begin{equation*}
    B_{H_1/2} = e^{g\Delta\tau\sigma^x/2} \otimes e^{g\Delta\tau\sigma^x/2} \otimes \cdots \otimes e^{g\Delta\tau\sigma^x/2}. 
\end{equation*}
Now we need to decompose the two-body propagator $B_{H_2}$ in order to have the decomposition in \eqref{eq:operator decomposition}. The trick is to use a discrete Hubbard-Stratonovich transformation \cite{ulmke2000auxiliary}:
\begin{equation}\label{eq:HS transformation}
    e^{\Delta \tau \sigma_i^z\sigma_j^z} = e^{-\Delta\tau}\frac{1}{2}\sum_{x_{ij}=\pm1}  e^{x_{ij}\lambda_{ij}(\sigma_i^z+\sigma_j^z)}, 
\end{equation}
where the constants $\lambda_{ij}$ are given by $\cosh{2\lambda_{ij}} = e^{2\Delta t}$, and each bond $\langle i,j \rangle$ is corresponded with an auxiliary field $x_{ij}$. We rewrite \eqref{eq:HS transformation} as
\begin{equation*}
    e^{\Delta \tau \sigma_i^z\sigma_j^z} = \sum_{x_{ij}=\pm1} p(x_{ij})b(x_{ij}), 
\end{equation*}
where we treat $x_{ij}$ as a random variable and $p(x_{ij})=1/2$ for $x_{ij}=\pm1$ as its probability distribution function, and 
\begin{equation*}
    b(x_{ij}) = e^{-\Delta\tau} e^{x_{ij}\lambda_{ij}(\sigma_i^z+\sigma_j^z)} = e^{-\Delta\tau}e^{x_{ij}\lambda_{ij}\sigma_i^z}e^{x_{ij}\lambda_{ij}\sigma_j^z}
\end{equation*}
is a product of two separable operators, thus it returns another separable state when acting on one. The two-body propagator can then be rewritten as
\begin{equation}\label{eq:hs general}
    B_{H_2} = e^{\Delta\tau\sum_{\braket{i,j}}\sigma_i^z\sigma_j^z} = \prod_{\braket{i,j}}e^{\Delta t \sigma_i^z\sigma_j^z} = \prod_{\braket{i,j}}\sum_{x_{ij}=\pm1} p(x_{ij})b(x_{ij}),
\end{equation}
which yields the desired decomposition \eqref{eq:operator decomposition}. Here, the matrices $\sigma_i^z\sigma_j^z$ are diagonal, thus the equality is exact in \eqref{eq:hs general}, and no extra Trotter error will be introduced. Since the Hamiltonian contains two different terms $B_{H_2}$ and $B_{H_1/2}$, to make cp-AFQMC more efficient, we modify Algorithm~\ref{alg:cp-AFQMC} slightly as follows. 

\begin{algorithm}[!htbp]
\caption{cp-AFQMC on the TFI system}\label{alg:cp-AFQMC 2}
\begin{algorithmic}
\Require 
Hamiltonians $H_1$ and $H_2$; initial walkers $\widehat \Psi^{(0)} = \sum_k w_k^{(0)} \frac{\phi_k^{(0)}}{\braket{\Psi_\mathrm{tr}, \phi_k^{(0)}}}$; trial wavefunction $\Psi_\mathrm{tr}$, step size $\Delta\tau$, number of iteration steps $M$. 
\Ensure Walkers $\widehat \Psi^{(M)} = \sum_k w_k^{(M)} \frac{\phi_k^{(M)}}{\braket{\Psi_\mathrm{tr}, \phi_k^{(M)}}}$
\For{$n=0,1,\cdots,M-1$}
\begin{itemize}
\item 
For walkers with positive weights: 
\begin{enumerate}[label=\arabic*., ref=\arabic*, leftmargin=1cm]
\item Let $\phi^{(n)\prime}_k \leftarrow B_{H_1/2}\phi^{(n)}_k$, $w_k^{(n)}\leftarrow w_k^{(n)} \frac{\max\{\braket{\Psi_\mathrm{tr}, \phi^{(n)\prime}_k},0\}}{\braket{\Psi_\mathrm{tr}, \phi^{(n)}_k}}$.
\item Update $\phi_k^{(n)\prime}$ to $\phi_k^{(n)\prime\prime}$ by applying $B_{H_2}$ to $\phi_k^{(n)\prime}$ stochastically using the decomposition in  \eqref{eq:hs general}. Update weights $w_k^{(n)}$ to $w_k^{(n+1)}$ as in \eqref{eq:tfi weight update}. 
\item Let $\phi^{(n+1)}_{k}\leftarrow B_{H_1/2}\phi^{(n)\prime\prime}_k$, $w_k^{(n+1)}\leftarrow w_k^{(n+1)} \frac{\max\{\braket{\Psi_\mathrm{tr}, \phi^{(n+1)}_k},0\}}{\braket{\Psi_\mathrm{tr}, \phi^{(n)\prime\prime}_k}}$.
\end{enumerate}

\item 
Periodically estimate the ground-state energy as in \eqref{eq:mixed energy estimator}. 

\item 
Periodically apply population control as in Algorithm~\ref{alg:ppl control}. 

\end{itemize}

\EndFor

\end{algorithmic}
\end{algorithm}

We remark that all of the entries in both the matrices $B_{H_1/2}$ and $B_{H_2}$ are non-negative. Thus, if we apply $B_{H_1/2}B_{H_2}B_{H_1/2}$ to a wavefunction with all the entries being positive, it returns another all-positive-entry function. Therefore, by choosing a trial function with all positive entries, the walkers will remain such a form, thus it always holds $\braket{\Psi_\mathrm{tr},\phi_k^{(n)}}>0$. The trial wavefunction \eqref{eq:initial trial} satisfies the condition, thus there will be no sign problem if one runs cp-AFQMC with this fixed trial function.

We now detail how to apply $B_{H_2}$ to a walker $\phi_k^{(n)\prime}$ stochastically. We sample $x_{ij}$ for each $\langle i,j\rangle$ one at a time, and apply a single $b(x_{ij})$ to $\phi^{(n)\prime}_k$. For example, for a single pair $\langle i,j\rangle$, if at least one $x_{ij}$ satisfies $\langle \Psi_\mathrm{tr}, b(x_{ij})\phi_k^{(n)\prime} \rangle > 0$, then we define the probability distribution function
\begin{equation*}
 q_{ij,k}^{(n)}(x_{ij}) = \frac{1}{\mathcal{N}^{(n)}_{ij,k}}\frac{\max\{\langle \Psi_\mathrm{tr}, b(x_{ij})\phi_k^{(n)\prime} \rangle, 0\}}{\braket{\Psi_\mathrm{tr}, \phi_k^{(n)\prime}}}p(x_{ij}). 
\end{equation*}
The update rule for the weight and the walker becomes
\begin{equation}\label{eq:tfi weight update}
    w^{(n+1)}_{k} = w_k^{(n)} \mathcal{N}^{(n)}_{ij,k}. 
\end{equation}
and 
\begin{equation*}
\phi^{(n)\prime\prime}_{k}= b(x^{(n)}_{ij,k})\phi_k^{(n)\prime}, x_{ij,k}^{(n)} \sim q_{ij,k}^{(n)}. 
\end{equation*}
If $\langle \Psi_\mathrm{tr}, b(x_{ij})\phi_k^{(n)\prime} \rangle \le 0$ for both $x_{ij}=\pm1$, then we remove this walker by setting $w_k^{(n+1)}=0$. 
This procedure is then repeated for all pairs $\langle i, j\rangle$.

\section{Systematic Error of cp-AFQMC on TFI Systems}\label{sec:sanity check}

In this section, we study the origin of the bias in cp-AFQMC, using the TFI system with $16$ spins in 1D as an example. As mentioned in Section~\ref{sec:numerical results}, the TFI system is free of sign problem with a properly chosen trial wavefunction, but there is still a systematic error caused by the fixed imaginary-time step $\Delta\tau$ and the finite population size $N$. 

At each iteration step, a Trotter error $O(\Delta\tau^3)$ is introduced, as shown in \eqref{eq:trotter approximation}. For a fixed imaginary-time, one needs to run $O(1/\Delta\tau)$ steps of random walk. Thus, the total error in terms of the energy estimation accumulates to $O(\Delta\tau^2)$. 
Ideally, the Trotter error will vanish as the imaginary-time step $\Delta\tau$ goes to $0$. However, this is not always the case as shown in Figure~\ref{fig:trotter error}. With $2000$ walkers, the estimated energy indeed converges to the ground-state energy as $\Delta\tau$ goes to $0$. If we use $200$ walkers, however, there is still a quite obvious bias even when $\Delta\tau$ becomes $0$. Moreover, with fewer walkers, the bias is consistently larger with any imaginary-time step $\Delta\tau$. 

\begin{figure}[!htbp]
\centering
\subfigure[Trotter error]{
\begin{minipage}{0.49\textwidth}
    \centering
    \includegraphics[height=4cm, width=6cm]{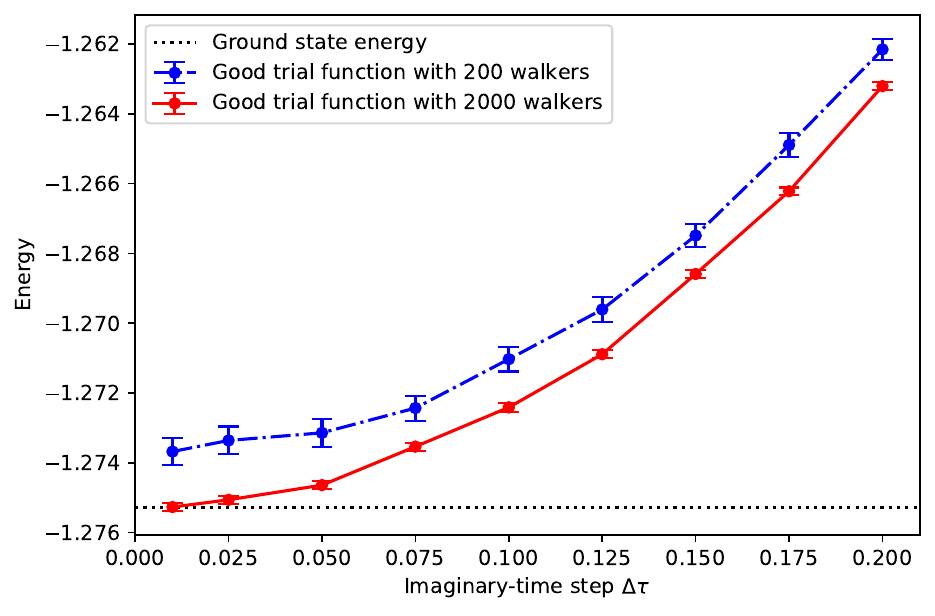}
    \label{fig:trotter error}
\end{minipage}
}
\hspace{-1.5cm}
\subfigure[Finite-population error]{
\begin{minipage}{0.49\textwidth}
    \centering
    \includegraphics[height=4cm, width=6cm]{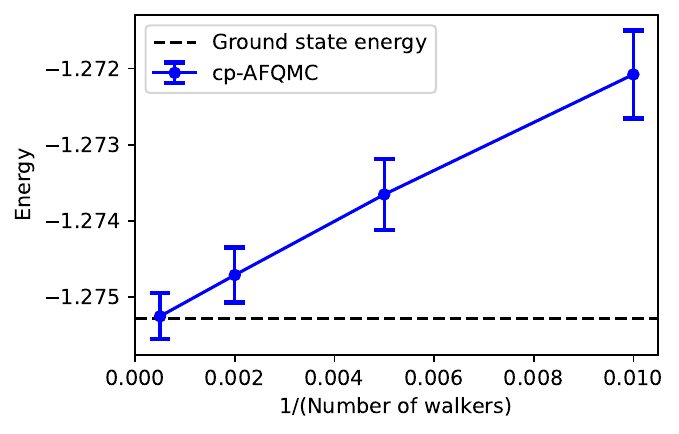}
    \label{fig:population size}
\end{minipage}
}
\vspace{-0.4cm}
\caption{Results of cp-AFQMC on $16$-spin system in 1D. (a): estimated energy with different imaginary-time step $\Delta\tau$; (b): estimated energy with different population size. }
\end{figure}

To further study how the error depends on the number of walkers, we fix the imaginary-time step to $\Delta\tau=0.01$, and run cp-AFQMC with different population size. As shown in Figure~\ref{fig:population size}, the error is roughly inversely proportional to the population size. The bias is introduced by the population control procedure as in Algorithm~\ref{alg:ppl control}. To be more specific, if we look at each single walker, it breaks into a random number of copies, with the average number of copies matching its weight. However, to maintain a constant population size, the number of copies must be adjusted based on the total, which introduces dependency among walkers. This dependency leads to a bias in the energy estimation.

\bibliographystyle{unsrt}  
\bibliography{library}

\end{document}